\pdfoutput=1
\documentclass{amsart}

\usepackage{fancyhdr}
\usepackage{graphicx}
\usepackage{pb-diagram,pb-xy}
\usepackage[all]{xy}
\usepackage[utf8]{inputenc}
\usepackage{mathrsfs}
\usepackage{amssymb}

\newtheorem{theorem}{Theorem}[section]
\newtheorem{lemma}[theorem]{Lemma}
\newtheorem{prop}[theorem]{Proposition}
\newtheorem{cor}[theorem]{Corollary}
\theoremstyle{definition}
\newtheorem{definition}[theorem]{Definition}

\newtheorem{remark}[theorem]{Remark}

\DeclareMathOperator{\Ext}{Ext}

\DeclareMathOperator{\Tor}{Tor}

\numberwithin{equation}{section}

\usepackage{hyperref}
\usepackage{comment}
\begin{document}

\title{Macdonald dual in the semilocal case and applications to local homology and local cohomology}


\author{A. L. Nascimento Filho}

\email{antonival@ufersa.edu.br}







\begin{abstract}


This paper extends Macdonald’s duality theory and Cuong–Nam’s local homology theory — which is in some sense dual to the local cohomology theory of A. Grothendieck [Gro1] — to semilocal noetherian rings equipped with the $J$-adic topology. By overcoming the key structural obstructions of the semilocal setting via Zöschinger’s structure theory and semilocal Matlis duality, this work establishes topological limit duality isomorphisms and links local homology with local cohomology. Ultimately, this framework provides the foundational machinery required to extend the classical results of Macdonald [Mac] and Cuong–Nam [C-N1] to the semilocal setting.

\end{abstract}

\maketitle

\section*{\textbf{Introduction}}

In this paper we study the Macdonald Dual of a $R$-module in context  the $R$ is semilocal. The Macdonald dual of a topological $R$-module $M$, in which case $R$ is local, is given by $M^*=Hom_R(M,\,E)$ where $E=E(R/\mathfrak{m})$ is the injective envelope of residual field and we consider only continuous homomorphisms. The study in this local case was well developed by I. G. Macdonald in his paper \textit{Duality over complete local rings} [Mac] of 1964. The Matlis Dual is given by $D(M)=Hom_R(M,\,E)$ where considers all homomorphisms, including those that are not continuous. A. Ooishi, in his paper \textit{Matlis duality and the width of a module} [Ooi] of 1976, studied such a dual in the semilocal case, where $E=E(R/\mathfrak{m})$ is replaced by $E(\bigoplus_{i=1}^nR/\mathfrak{m}_i)$. Here we study the Macdonald dual in which case $E$ is replaced in the same way.

The first section presents the linearly topologized modules, the linearly compact modules and some topological properties of these modules. Furthermore, we consider the modules to be linearly compact on a semilocal noetherian ring and present results involving these modules. 

In the second section, we begin to consider the complete ring with the $\mathtt{J}$-adic topology (where $\mathtt{J}$ is the Jacobson radical of the $R$) and from that some results are acquired. 

 In addition, Macdonald Dual in the semilocal case is studied and many results on this dual are achieved, culminating in the Proposition 2.6. We will apply the results obtained in the first two sections to local homology theory.

A local homology theory for artinian modules was presented for N. T. Cuong and T. T. Nam in the paper \textit{The $I$-adic completion and local homology for artinian modules} [C-N2] in 2001, where was defined the \textit{i}th \textit{local homology module} $H_i^I(M)$ of an $R$-module $M$
with respect to the ideal $I$ by \[H^I_i(M)=\varprojlim_tTor_i^R(R/I^t,\,M)\]where $R$ is local noetherian. This definition is in some sense dual to Grothendieck's  definition of local cohomology modules for
finitely generated modules over noetherian rings [Gro1] where the re was defined the \textit{i}th \textit{local cohomology module} $H^i_I(M)$ of an $R$-module $M$
with respect to the ideal $I$ by \[H^i_I(M)=\varinjlim_tExt_R^i(R/I^t,\,M)\]where $R$ is local noetherian.

Still in [C-N2] was showed many theorems properties over local homology modules and that $H_i^I(M)\cong L_i^I(M)$ when $M$ is artinian, where $L_\bullet^I(-)$ is the left derived functor of the $I$-adic completion functor $\Lambda_I(-)=\varprojlim_t(R/I^t\otimes_R-)$.

Posteriorly, in 2008, Cuong and Nam developed this theory by replacing the Artinian module hypothesis with a linearly compact module in paper \textit{A local homology theory for linearly compact modules} [C-N1] generalizing results of [C-N2] and presenting some results of duality and reflexivity of certain modules in relation to the \textit{Macdonald Dual}, studied by I. G. Macdonald in the paper \textit{Duality over complete local rings} [Mac] of 1962,

In [C-N1] many results of [Mac] was utilized and, naturally, the hypothesis $R$ local was required. Here, we generalize the paper [C-N1] for semilocal case. 

In the section three, we recall the definition of \textit{i}th local homology module, 
generalize for semilocal case one result that establishes a criterion for a module finitely generated over a noetherian semilocal ring to be linearly compact, a result that relates local homology module of a $R$-module $M$ and local homology module of $\Gamma_I(M)$, the $I$-torsion functor, is showed, 
 some results on the noetherianity of local homology modules of compact linearly semi-discrete modules are presented,  in addition to results involving Macdonald duality between local homology modules and local cohomology modules.

Finally, section four contains results involving the noetherian  dimension \newline$Ndim (-)$ and the Krull dimension $dim (-)$ of linearly compact semi-discrete modules, including generalizations of well-known results in Grothendieck's local cohomology theory, a criterion for vanishing certain modules of local cohomology and some results of artinianity of certain modules of local cohomology of linearly compact semi-discrete modules.

We build the necessary framework to extend the results of \textit{Duality over complete local rings} [Mac] and \textit{A local homology theory for linearly compact modules} [C-N1] to the semilocal setting, providing detailed proofs specifically for those results where the original local techniques break down. For the omitted results, the proofs are analogous to the local case, obtained by replacing the classical local tools with their corresponding semilocal counterparts established in this paper. For this reason, we cite the original results as references throughout.

We suggest that the reader see chapters 2 and 3 of [Mac] if they are not familiar with the basic results on linearly topologized modules and linearly compact modules.

Throughout the text, R is a commutative ring with identity.
\section{Linearly Compact Modules Over a Semilocal Ring}

\begin{definition}

	Let $R$ be a topological ring and $M$ be a topological $R$-module. 

	$M$ is said to be \textit{\textbf{linearly topologized}} if $M$ has a fundamental system of neighborhoods
	of zero $\mathcal{M}$ consisting of submodules (for example, a discrete topological $R$-module is linearly topologized). The elements of $\mathcal{M}$ are open submodules of $M$, and $\mathcal{M}$ satisfies the following conditions:

	\item \textbf{(1.1.1)} Given $N_1$ and $N_2$ in $\mathcal{M}$ there is an $N_3$ in $\mathcal{M}$ such that $N_3\subseteq N_1\cap N_2$;
	
	\item \textbf{(1.1.2)} Given $x\in M$ and $N\in\mathcal{M}$ there is a nucleus $U$ of $R$ such $Ux\subseteq N$.

	Conversely, if $M$ is an R-module and $\mathcal{M}$ is a family of submodules of $M$, then $\mathcal{M}$ is
	a fundamental system of neighborhoods of zero on $M$ provided $(1.1.1)$ and $(1.1.2)$ are satisfied, and the topology of $M$ is then uniquely determined.
\end{definition}
\begin{remark}
	
	If $M$ is a linearly topologized  $R$-module, $N$ a submodule of $M$, then $N$ (with the induced topology) and $M/N$ (with the quotient topology) are linearly topologized $R$-modules. $M/N$ is Hausdorff if and only if $N$ is closed in $M$.
	
\end{remark}

\begin{definition}
	
	Let $R$ be a topological ring. From now on we shall have to deal exclusively with Hausdorff linearly topologized $R$-modules, and henceforth ``$R$-module'' shall mean ``Hausdorff linearly topologized $R$-module''. An $R$-module $M$ is \textit{\textbf{linearly compact}} if $M$ has the following property: if $\mathcal{F}$ is a family of closed cosets (i.e. cosets of closed submodules) in $M$ which has the finite intersection property, then the cosets in $\mathcal{F}$ have a non-empty intersection.
\end{definition}
 Every artinian $R$-module is linearly compact with the discrete topology [Zel] Proposition 5.

\begin{remark} \text{ }
	
	\subitem (i) If $M$ is an $R$-module, $N$ a linearly compact submodule of $M$, then $N$ is closed in $M$.
	\subitem (ii) If $M$ is a linearly compact $R$-module, $N$ a closed submodule of $M$, then $N$ is a linearly compact.
	\subitem (iii) 	If $M,N$ are $R$-modules, $M$ linearly compact, $f:M\rightarrow N$ a continuous homomorphism, then $f(M)$ is linearly compact and hence is a closed map. 
	\subitem (iv) If $M$ is an $R$-module, $N$ a closed submodule of $M$, then $M$ is linearly compact if and only if $N$ and $M/N$ are linearly compact. 
	
\end{remark}

Let $(R,\,\mathfrak{m}_1,\,...,\,\mathfrak{m}_n)$ be a noetherian semilocal ring and $\mathtt{J}=\bigcap_{i=1}^n\mathfrak{m}_i$ its Jacobson ideal.

Considering $R$ with the $\mathtt{J}$-adic topology it becomes a Hausdorff topological ring. Let $\widehat{R}=\varprojlim\dfrac{R}{\mathtt{J}^n}$ be the $\mathtt{J}$-adic completion of $R$: $\widehat{R}$  is a complete noetherian semilocal ring, and its topology is the $\widehat{\mathtt{J}}$-adic topology, where $\widehat{\mathtt{J}}$ is the extension of $\mathtt{J}$ to $\widehat{R}$ via the natural homomorphism $\eta:R\rightarrow\widehat{R}$.

If $N$ is a closed $R$-submodule of $\widehat{R}$ then $N$ is an ideal of $\widehat{R}$. 
Furthermore, all ideals of $\widehat{R}$ are closed in $\widehat{R}$.

Let $M$ be an $R$-module
. If each element of $M$ is annihilated by a power of $\mathtt{J}$, we say that $M$ is $\mathtt{\textbf{J}}$-\textit{\textbf{torsion}}. 
If $M$ is a $\mathtt{J}$-torsion $R$-module then, $M$ have structure of $\widehat{R}$-module.

The injective envelope $E(R/\mathtt{J})$ of $R/\mathtt{J}$ will play an important role in what follows. We shall denote $E(R/\mathtt{J})$ by $R^*$.

\begin{remark}

	\textup{$R^*$ is an artinian $\mathtt{J}$-torsion $R$-module. Indeed, by Chinese Remainder Theorem, we have $$\dfrac{R}{\mathtt{J}}\cong\dfrac{R}{\mathfrak{m}_1}\oplus...\oplus\dfrac{R}{\mathfrak{m}_n}.$$ Hence $R^*$ is artinian (see [B-S] $10.1.4$ and $10.2.5$). Now, since $R^*$ is artinian, we have that 
	there exists $t\in\mathbb{N}$ (the set of natural numbers)
	such that $\mathtt{J}^tx=0$, with $x\in R^*$.}

	\textup{Furthermore $R^*$ (with the discrete topology) is a linearly topologized $R$-module. In fact, is an $\widehat{R}$-module and as such its lattice of $\widehat{R}$-submodules is identical with its lattice of $R$-submodules.
		From [Mac] $3.10$ the only linear topology that $R^*$ can carry is the discrete topology satisfying $(1.1.1)$. Since $R^*$ is $\mathtt{J}$-torsion the condition $(1.1.2)$ is satisfied and therefore $R^*$ (with the discrete topology) is a linearly topologized $R$-module.}
\end{remark}

\begin{remark}
	
 A $R$-module $M$ is \textbf{\textit{semi-discrete}} if every submodule of $M$ is closed.
Discrete $R$-modules are semi-discrete. The class of semi-discrete linearly
compact modules contains all artinian modules. Moreover, when $R$ is complete, a finitely generated $R$-module $M$ is  semi-discrete linearly
compact. Indeed, since $ R\cong\varprojlim
R/\mathtt{J}^t$ and each $R/\mathtt{J}^t$ is artinian, $R$ is semi-discrete
linearly compact with the $\mathtt{J}$-adic topology. As $M$ is a finitely generated $R$-module we have
an open continuous epimorphism $R^t\rightarrow M$. 
Therefore $M$ is semi-discrete and linearly compact
(Proposition 2.2 also provides this). In consequence the class of semi-discrete linearly compact modules contains also all finitely
generated modules over a complete ring. However, there are semi-discrete linearly
compact $R$-modules which are neither artinian nor finitely generated. Examples for this conclusion can be found in [C-N1] 7.3 (i).
\end{remark}


\begin{theorem} 
	 Let $M$ be a linearly compact semi-discrete $\mathtt{J}$-torsion $R$-module. Then $M$ is artinian.
\end{theorem}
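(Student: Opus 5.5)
We will show that $M$ satisfies the descending chain condition by reducing to the socle. The plan is to first show that the socle $\operatorname{Soc}(M)=(0:_M\mathtt{J})$ is a finitely generated (hence finite length) $R$-module. We then use that $M$ is $\mathtt{J}$-torsion, so that $M$ is an essential extension of its socle. This embeds $M$ into a finite direct sum of copies of $E(R/\mathfrak{m}_i)$, which is artinian by the Remark on $R^*$ (or [B-S] 10.1.4, 10.2.5). Since a submodule of an artinian module is artinian, $M$ is artinian.

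\textbf{Step 1: the socle has finite length.} The module $S=(0:_M\mathtt{J})$ is a submodule of $M$, and it is closed because $M$ is semi-discrete. By Remark 1.4(ii), $S$ is linearly compact, and every submodule of $S$ is closed, so $S$ is semi-discrete linearly compact. Moreover $S$ is a module over $R/\mathtt{J}\cong R/\mathfrak{m}_1\oplus\cdots\oplus R/\mathfrak{m}_n$, so $S=\bigoplus_i S_i$ with $S_i=e_iS$ a vector space over the field $k_i=R/\mathfrak{m}_i$, and each $S_i$ is again closed, hence semi-discrete linearly compact. It therefore suffices to show that a vector space $V$ over a field $k$ which is linearly compact with every subspace closed is finite dimensional. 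Suppose instead that $V$ has an infinite linearly independent family $\{v_j\}_{j\in\mathbb{N}}$. Extend it to a basis and let $W_t$ be the span of the basis vectors other than $v_1,\dots,v_t$. Consider the cosets $v_1+\cdots+v_t+W_t$. Each is a coset of a closed submodule, and they are nested, so the family has the finite intersection property. Linear compactness then gives an element $x$ in their intersection. But $x$ is a finite combination of basis vectors, while membership in every coset forces the coefficient of every $v_j$ to equal $1$, a contradiction. Hence $\dim_{k_i}S_i<\infty$, and $S$ has finite length. This is the only place where the topological hypotheses are used, and it is the main obstacle: one must check that the chosen cosets are really cosets of closed submodules, which is exactly what semi-discreteness guarantees.

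\textbf{Step 2: essentiality and embedding.} Let $0\neq x\in M$. Since $M$ is $\mathtt{J}$-torsion, there is a least $t\geq1$ with $\mathtt{J}^tx=0$, and then some $y\in\mathtt{J}^{t-1}x$ is nonzero with $\mathtt{J}y=0$. So $Rx\cap S\neq0$, and $S$ is essential in $M$. Since $S$ has finite length, $S\cong\bigoplus_{i}(R/\mathfrak{m}_i)^{r_i}$, so $E(S)\cong\bigoplus_i E(R/\mathfrak{m}_i)^{r_i}$, a direct summand of a finite sum of copies of $R^*$. By essentiality, $M\subseteq E(M)=E(S)$. Each $E(R/\mathfrak{m}_i)$ is artinian, so $E(S)$ is artinian, and hence so is its submodule $M$.

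Step 2 is purely algebraic, so the whole weight of the proof sits on the finiteness of the socle in Step 1.
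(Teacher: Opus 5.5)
Your proof is correct, but it takes a different route from the paper's.

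\textbf{The paper's route.} The paper applies Zöschinger's structure theorem to obtain a finitely generated submodule $B\subseteq M$ with $M/B$ artinian. Since $B$ is finitely generated and $\mathtt{J}$-torsion, it is killed by some $\mathtt{J}^t$. So $B$ is a finitely generated module over the artinian ring $R/\mathtt{J}^t$, hence artinian, and $M$ is artinian as an extension of artinian modules.

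\textbf{Your route.} You essentially run Macdonald's original socle argument. You repair the point the paper says breaks down (that $R/\mathfrak{m}$ is a field) by splitting $(0:_M\mathtt{J})$ along lifts $a_i\in R$ of the idempotents of $R/\mathtt{J}\cong\prod_i R/\mathfrak{m}_i$. This produces $R/\mathfrak{m}_i$-vector spaces, each linearly compact with every subspace closed, hence finite dimensional by your nested-coset argument. Essentiality of the socle in the $\mathtt{J}$-torsion module $M$ then gives $M\hookrightarrow\bigoplus_i E(R/\mathfrak{m}_i)^{r_i}$, which is artinian.

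\textbf{What each approach buys.} The paper's proof is shorter and relies on a tool it uses repeatedly anyway (Proposition 2.2, Remark 4.1, Proposition 4.4). However, it rests on a deep external structure theorem. Yours is elementary and self-contained: it needs only the definition of linear compactness, the Chinese Remainder Theorem, and the artinianness of $E(R/\mathfrak{m}_i)$ over a noetherian ring. It also gives a sharper conclusion: $M$ has a finite-length essential socle and embeds in a finite direct sum of copies of $R^*=E(R/\mathtt{J})$. Finally, it shows that the paper's remark that Macdonald's technique does not carry over to the semilocal case is too pessimistic, since the idempotent splitting restores the field structure componentwise.
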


\begin{proof}

	By main Theorem in [Z{\"o}s], there is a finitely generated submodule $B$ of $M$ such that $M/B$ is an artinian module. Since $M$ is $\mathtt{J}$-torsion, $B$ is also, and $IB = 0$ for some ideal $I$ power of $\mathtt{J}$.
	This implies that $B$ is a finitely generated module over $R/I$, which is an artinian
	ring. Thus, $B$ is an artinian $R/I$-module, and hence $B$ is an artinian
	$R$-module.
	
	Therefore, $M$ is artinian.

\end{proof}
The technique used by Macdonald in this result above does not apply to our case, as the fact that $R/\mathfrak{m}$ is a field is strongly used.


	If $M$ is an $R$-module, a \textbf{character} of $M$ is a continuous homomorphism from $M$ into $R^*$.
	\begin{prop}

		Let $M$ be an $R$-module. Then $M$ has enough characters, that is given any $x\neq0$ in $M$ there is a character $u$ of $M$ such that $u(x)\neq0$.
	\end{prop}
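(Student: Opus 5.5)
Given $x\neq 0$ in $M$, I will reduce to a discrete quotient and then use the injectivity of $R^*$, following Macdonald's argument in the local case with $E(R/\mathfrak{m})$ replaced by $R^*=E(R/\mathtt{J})$.

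First, since $M$ is a Hausdorff linearly topologized $R$-module, there is an open submodule $U$ of $M$ with $x\notin U$. The quotient $M/U$ is discrete, because $U$ is open. So any homomorphism $M/U\to R^*$ is continuous, and composing it with the projection $\pi:M\to M/U$ gives a continuous homomorphism $M\to R^*$. It therefore suffices to find a homomorphism $v:M/U\to R^*$ with $v(\bar x)\neq 0$, where $\bar x=\pi(x)\neq 0$.

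Next, consider the cyclic submodule $R\bar x\subseteq M/U$. By (1.1.2) there is a nucleus of $R$, that is, a power $\mathtt{J}^t$ in the $\mathtt{J}$-adic topology, such that $\mathtt{J}^t\bar x\subseteq U$. Hence $\mathtt{J}^t\bar x=0$ in $M/U$. Thus $R\bar x\cong R/\mathfrak{a}$ with $\mathfrak{a}\supseteq \mathtt{J}^t$ a proper ideal, and $R/\mathfrak{a}$ has finite length. The step where the semilocal structure enters is producing a nonzero map $R\bar x\to R^*$ that does not kill $\bar x$:
\begin{itemize}
\item Choose a maximal ideal $\mathfrak{m}_i\supseteq\mathfrak{a}$, which exists because $\mathfrak{a}$ is proper.
\item Take the surjection $R/\mathfrak{a}\to R/\mathfrak{m}_i$.
\item By the Chinese Remainder Theorem, $R/\mathfrak{m}_i$ is a direct summand of $R/\mathtt{J}$, so there is an inclusion $R/\mathfrak{m}_i\hookrightarrow R/\mathtt{J}\hookrightarrow R^*$.
\end{itemize}
The composite $w:R\bar x\to R^*$ sends $\bar x$ to the nonzero image of $1+\mathfrak{m}_i$.

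Finally, since $R^*$ is injective, $w$ extends to a homomorphism $v:M/U\to R^*$. Then $u=v\circ\pi$ is a character of $M$ with $u(x)=w(\bar x)\neq 0$.

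The argument contains no real obstacle. The one point needing care is that $u$ is continuous as a map into $R^*$ with its discrete topology, as in the Remark on $R^*$. This holds because $\ker u\supseteq U$ is open, so the preimage of every point is a union of cosets of $U$ and hence open. The semilocal adaptation consists only in choosing one $\mathfrak{m}_i$ and using the summand $R/\mathfrak{m}_i$ of $R/\mathtt{J}$. In fact (1.1.2) is not even needed, since any nonzero cyclic module surjects onto some $R/\mathfrak{m}$; but to embed that $R/\mathfrak{m}$ in $R^*$ we need $\mathfrak{m}$ to be one of the $\mathfrak{m}_i$, and that is guaranteed because $R$ is semilocal, so its only maximal ideals are $\mathfrak{m}_1,\dots,\mathfrak{m}_n$.
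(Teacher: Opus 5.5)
Your proof is correct and is essentially the paper's argument. The paper likewise passes to the discrete quotient $M/U$, notes that $(0:_R\bar x)$ is proper and so lies in some $\mathfrak{m}_j$, maps $\langle\bar x\rangle$ onto the $j$th summand of $R/\mathfrak{m}_1\oplus\cdots\oplus R/\mathfrak{m}_n\cong R/\mathtt{J}\subseteq R^*$, and then extends the map using the injectivity of $R^*$. As you observe yourself, the appeal to (1.1.2) is unnecessary; also, as written it should read $\mathtt{J}^t x\subseteq U$ rather than $\mathtt{J}^t\bar x\subseteq U$.
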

	\begin{proof} Since $M$ is Hausdorff it has an open submodule $U$ which does not contains $x$. Let $\pi_U$ be the canonical projection from $M$ into $M/U$ and let $\overline{x}$ be the image of $x$ in $M/U$. 
	Since $(0:_R\overline{x})$ is a proper ideal of $R$, we have that

 $(0:_R\overline{x})\subseteq\mathfrak{m}_j$ for some $j$. Define the homomorphism
		\begin{eqnarray*}
			\psi:<\overline{x}>&\rightarrow&\bigoplus_{i=1}^{n}R/\mathfrak{m}_i \\ a\cdot\overline{x}&\mapsto&(0+\mathfrak{m}_1,\ldots,\,a+\mathfrak{m}_j,\ldots,\,0+\mathfrak{m}_n). 
		\end{eqnarray*}	
		Note that $\psi$ is well defined. Indeed,
		
		$b\cdot\overline{x}=a\cdot\overline{x}\Rightarrow(a-b)\cdot\overline{x}=0\Rightarrow a-b\in(0:_R\overline{x})\Rightarrow a-b\in\mathfrak{m}_j\\\Rightarrow a+\mathfrak{m}_j=b+\mathfrak{m}_j.$
		
		Now, since $\bigoplus_{i=1}^{n}R/\mathfrak{m}_i\cong R/\mathtt{J}$ and denoting $i:R/\mathtt{J}\hookrightarrow E(R/\mathtt{J})=R^*$, we have the commutative diagram
		$$\xymatrix{
			&R^*& \\
			0\ar[r]&<\overline{x}>\ar[r]_{j}\ar[u]^{i\circ\psi}&M/U\ar@{-->}[ul]_{\overline{u}}\\
		}$$
		where $j$ is the inclusion and there is $\overline{u}$ since $R^*$ is injective. Note that $\overline{u}$ is continuous since $M/U$ is discrete.
		Set $u=\overline{u}\circ\pi_U$. We have $u\in Hom_R(M,\,R^*)$ and $$u(x)=\overline{u}\circ\pi_U(x)=\overline{u}(\overline{x})=\overline{u}(j(\overline{x}))=i(\psi(\overline{x})).$$ Since $\psi(\overline{x})\neq0$, $u(x)\neq0$.  
	\end{proof}	
	As in our case there can be more than one maximal ideal, it was necessary to create the map $\psi$ and use the Chinese Remainder Theorem.
	
	
	Since the theory of essential extensions and injective envelopes is valid in any abelian category which satisfies (AB5) and has a generator (see [Gab]) it follows that the dual of this theory is valid in $\mathcal{C}$. The essential features are as follows:

Let $M$ be a linearly compact $R$-module. A \textit{covering} of $M$ is a continuous epimorphism $f:Q\rightarrow M$ where $Q$ is a linearly compact $R$-module. $f$ is an \textit{essential covering} of $M$ if for every proper closed submodule $N$ of $Q$ we have $N+ker\,f\subsetneq Q$.

\begin{definition}\textit{}
	
	(a) $M$ has a maximal essential covering $\varepsilon:P\rightarrow M$, i.e. if $\rho:Q\rightarrow M$ is any essential covering of $M$ then there is a continuous homomorphism $\varphi:P\rightarrow Q$ such that $\varepsilon=\rho\circ\varphi$.
	
	(b) $P$ is unique up to isomorphism, i.e. if $\varepsilon_0:P_0\rightarrow M$ is another maximal essential covering of $M$ then we have a topological isomorphism $\alpha:P\rightarrow P_0$ such that $\varepsilon=\varepsilon_0\circ\alpha$.
	
	(c) $P$ is a projective object of $\mathcal{C}$, and $\varepsilon:P\rightarrow M$ is the smallest projective covering of $M$, i.e. if $\rho:Q\rightarrow M$ is a continuous epimorphism and $Q$ is a projective object of $\mathcal{C}$, then there a continuous homomorphism $\psi:Q\rightarrow p$ such that $\rho=\varepsilon\circ\psi$.

	The unique $P$ satisfying the above properties is called the \textbf{projective envelope}
	of $M$.
\end{definition}	
	
	\begin{prop}
	The projective envelope of $R/\mathtt{J}$ in $\mathcal{C}$ is $\widehat{R}$.
\end{prop}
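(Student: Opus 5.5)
Here $\mathcal{C}$ is the category of linearly compact $R$-modules with continuous homomorphisms, and $\widehat{R}$ carries the $\widehat{\mathtt{J}}$-adic topology. The plan is to check that the natural map $\varepsilon:\widehat{R}\to\widehat{R}/\widehat{\mathtt{J}}\cong R/\mathtt{J}$ is an essential covering with projective source. By properties (a)--(c) and the uniqueness in (b), this identifies $\widehat{R}$ as the projective envelope. The proof then splits into three steps.

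First, $\widehat{R}$ lies in $\mathcal{C}$. Indeed $\widehat{R}\cong\varprojlim \widehat{R}/\widehat{\mathtt{J}}^t$, each $\widehat{R}/\widehat{\mathtt{J}}^t\cong R/\mathtt{J}^t$ is artinian and hence linearly compact discrete, and an inverse limit of linearly compact modules is linearly compact (Remark on semi-discrete modules, [Mac] 3). The map $\varepsilon$ is a continuous epimorphism because $\widehat{\mathtt{J}}$ is open.

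Second, $\varepsilon$ is essential. Let $N$ be a closed submodule of $\widehat{R}$ with $N+\widehat{\mathtt{J}}=\widehat{R}$. Closed $R$-submodules of $\widehat{R}$ are ideals of $\widehat{R}$, as stated earlier. Since $\widehat{\mathtt{J}}$ is the Jacobson radical of the noetherian semilocal ring $\widehat{R}$, Nakayama's lemma gives $N=\widehat{R}$. So every proper closed $N$ satisfies $N+\ker\varepsilon\subsetneq\widehat{R}$.

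Third, $\widehat{R}$ is projective in $\mathcal{C}$. Take a continuous epimorphism $g:A\to B$ in $\mathcal{C}$ and a continuous $h:\widehat{R}\to B$, and set $b=h(1)$. Since $g$ is onto, the fibre $g^{-1}(b)$ is a nonempty closed coset. Choose $a\in g^{-1}(b)$ and define $\varphi(r)=ra$ for $r\in\widehat{R}$. For this to make sense we need $A$ to be an $\widehat{R}$-module and $\varphi$ to be continuous, and this is where I expect the main obstacle. I would handle it as follows. For each open submodule $U$ of $A$, the quotient $A/U$ is discrete and linearly compact, hence artinian. By (1.1.2), $\mathtt{J}^t a\subseteq U$ for some $t$. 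Therefore the map $r\mapsto ra+U$ extends uniquely by continuity from $R$ to $\widehat{R}$, since it factors through $R/\mathtt{J}^t\cong\widehat{R}/\widehat{\mathtt{J}}^t$. Passing to $\varprojlim_U A/U\cong A$, which holds because $A$ is complete (linearly compact modules are complete, [Mac]), gives a continuous $R$-linear $\varphi:\widehat{R}\to A$. The same extension argument applied to $h$ shows that $g\circ\varphi$ and $h$ agree on the dense image of $R$ in $\widehat{R}$. As $B$ is Hausdorff, $g\circ\varphi=h$.

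Finally, combine the steps. Let $\rho:Q\to R/\mathtt{J}$ be any covering by a projective $Q$. Projectivity of $Q$ gives a continuous $\psi:Q\to\widehat{R}$ with $\rho=\varepsilon\circ\psi$. Then $\psi(Q)$ is closed by Remark (iii), and $\psi(Q)+\widehat{\mathtt{J}}=\widehat{R}$ because $\rho$ is onto. By the second step, $\psi$ is onto. Projectivity of $\widehat{R}$ then splits $\psi$, so $\widehat{R}$ is the smallest projective covering, i.e.\ property (c). Properties (a) and (b) follow formally from essentiality and projectivity.
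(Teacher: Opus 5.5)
Your proof is correct. Its overall structure matches the paper's: show that $\widehat{R}$ is linearly compact, show that $\widehat{R}\to R/\mathtt{J}$ is essential, show that $\widehat{R}$ is projective in $\mathcal{C}$, and then appeal to the general theory of Definition 1.9. For essentiality you use Nakayama with $\widehat{\mathtt{J}}$ the Jacobson radical, while the paper uses $\widehat{\mathtt{J}}=\bigcap_i\widehat{\mathfrak{m}_i}$ from [Kle]; this is the same point.

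The genuine difference is the projectivity step.

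\begin{itemize}
\item The paper invokes Macdonald's criterion [Mac] 5.12. This reduces projectivity to lifting continuous maps $f:\widehat{R}\to M$ through $R^*\to M$, for $M$ a quotient of $R^*$. Such $M$ are $\mathtt{J}$-torsion, hence $\widehat{R}$-modules, so $f$ is automatically $\widehat{R}$-linear and lifts $\widehat{R}$-linearly. Continuity of the lift then comes from $\widehat{R}/\ker g$ being artinian.
\item You verify the lifting property directly against an arbitrary continuous epimorphism $A\to B$ in $\mathcal{C}$. You extend $r\mapsto ra$ from $R$ to $\widehat{R}$ by continuity through $A\cong\varprojlim_U A/U$. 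You conclude by density of $R$ in $\widehat{R}$ and Hausdorffness of $B$.
\end{itemize}

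Your route is more self-contained. It needs only (1.1.2), completeness of linearly compact modules, and density. It also avoids having to check that Macdonald's criterion, proved for a local ring with $E(k)$, still holds for $R^*=E(R/\mathtt{J})$ in the semilocal setting; the paper takes this for granted. The paper's route is shorter once that criterion is granted, and it stays inside the duality framework built around $R^*$.

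One small remark on your third step. The claim that $A/U$ is artinian because it is ``discrete and linearly compact'' silently needs that $A/U$ is $\mathtt{J}$-torsion, which follows from (1.1.2), together with Theorem 1.7. In any case you never use this claim: the inclusion $\mathtt{J}^t a\subseteq U$ is all your extension argument requires.
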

\begin{proof}Note that $\widehat{R}$ is linearly compact for it is the inverse limit of the artinian modules $R/\mathtt{J}^n$.
	
	The natural homomorphism $\widehat{R}\rightarrow R/\mathtt{J}$ is an essential covering of $R/\mathtt{J}$. For suppose $N$ is a closed $R$-submodule of $\widehat{R}$ and $N+\widehat{\mathtt{J}}=\widehat{R}$. Since $N$ is closed it is an ideal of $\widehat{R}$; since $\widehat{(\cap_{i=1}^n\mathfrak{m}_i)}=\cap_{i=1}^n\widehat{\mathfrak{m}_i}$ by [Kle] 22.53, we must have $N=\widehat{R}$.
	
	We claim that $\widehat{R}$ is a projective object of $\mathcal{C}$ by [Mac] 5.12. If $M$ is a quotient module of $R^*$, $M$ is an $\widehat{R}$-module by Remark 1.5. Let $f:\widehat{R}\rightarrow M$ be any continuous $R$-homomorphism. Then $f$ is an $\widehat{R}$-homomorphism, for if $f(1)=\overline{m}\in M$ and $\widehat{x}\in\widehat{R}$, let $(x_n)$ be a Cauchy sequence in $R$ whose limit is $\widehat{x}$, then $f(x_n)=x_n\overline{m}$ for all $n$, and $\widehat{x}\,\overline{m}=x_n\overline{m}$ for all sufficiently large $n$. Hence $f(\widehat{x})=limf(x_n)=\widehat{x}\,\overline{m}$. 
	
	The diagram
	
	$$\xymatrix@C=12pt{ &&&&R^*\ar[rr]&&M\ar[rr]&&0&&\\&&&&&\widehat{R}\ar[ur]_{f}&&&&&}
	$$
	is therefore a diagram of $\widehat{R}$-modules and $\widehat{R}$-homomorphisms. $f$ can therefore be lifted to an $\widehat{R}$-homomorphism $g:\widehat{R}\rightarrow R^*$. $\widehat{R}/ker\,g$ is artinian, hence $ker\,g\supseteq\widehat{\mathtt{J}}^n$ for some $n$, hence $ker\,g$ is open and therefore $g$ is continuous. Hence by [Mac] 5.12  $\widehat{R}$ is a projective object of $\mathcal{C}$. The result follows from Definition 1.9.
\end{proof}		
	
	Here it was necessary to use $\widehat{(\cap_{i=1}^n\mathfrak{m}_i)}=\cap_{i=1}^n\widehat{\mathfrak{m}_i}$.
	
		

\section{J-Separated Modules and Macdonald Duality}
	In this section $(R,\,\mathfrak{m}_1,\,...,\,\mathfrak{m}_n)$ shall be a \textit{complete}, with the $\mathtt{J}$-adic topology, (noetherian semilocal, as usual) ring, where $\mathtt{J}=\bigcap_{i=1}^n\mathfrak{m}_i$.

	In this setting, for $i\neq j$, $\mathfrak{m}^r_i$ and $\mathfrak{m}^r_j$ are coprime for all $i,j=1,...,\,n$ and $r>0$ (see [A-M]  1.16). Thus, by the Chinese Remainder Theorem we have that $$\dfrac{R}{\mathtt{J}^r}\cong\dfrac{R}{\mathfrak{m}_1^r}\oplus...\oplus\dfrac{R}{\mathfrak{m}_n^r}.$$ Hence $ \dfrac{R}{\mathtt{J}^r}$ is artinian for all $r>0$. 
	
	Since $R$ is complete it is a linearly compact $R$-module by [Mac] 5.5, because $R\cong\varprojlim_{n}\dfrac{R}{\mathtt{J}^n}$, it is an inverse limit of artinian modules. If $M$ is an $R$-module, which is \textbf{Hausdorff} in the $\mathtt{J}$-adic topology, that is, $\bigcap_{n}\mathtt{J}^nM=0$, we say that $M$ is $\mathtt{\textbf{J}}$-\textbf{separated}.
	

	\begin{prop}
	
		If $M$ is artinian and $\mathtt{J}$-separated then $M$ is of finite length.
	\end{prop}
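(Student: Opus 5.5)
The idea is to use the descending chain condition to show that the $\mathtt{J}$-adic filtration of $M$ stops, and then to use $\mathtt{J}$-separatedness to show that it stops at $0$. Consider the descending chain of submodules
$$M\supseteq \mathtt{J}M\supseteq \mathtt{J}^2M\supseteq\cdots .$$
Since $M$ is artinian, the chain stabilizes: there is $r$ with $\mathtt{J}^rM=\mathtt{J}^{r+k}M$ for all $k\geq 0$. Therefore
$$\bigcap_{n}\mathtt{J}^nM=\mathtt{J}^rM.$$
Because $M$ is $\mathtt{J}$-separated, this intersection is $0$, so $\mathtt{J}^rM=0$.

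It follows that $M$ is a module over $R/\mathtt{J}^r$. As observed at the start of this section, this ring is artinian, since $R/\mathtt{J}^r\cong\bigoplus_{i=1}^n R/\mathfrak{m}_i^r$.

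Next I would show that $M$ is finitely generated. Use the finite filtration
$$M\supseteq\mathtt{J}M\supseteq\cdots\supseteq\mathtt{J}^rM=0.$$
Each factor $\mathtt{J}^kM/\mathtt{J}^{k+1}M$ is an artinian module annihilated by $\mathtt{J}$, so it is an artinian module over $R/\mathtt{J}\cong\bigoplus_i R/\mathfrak{m}_i$. Such a module decomposes as $\bigoplus_i e_i(\mathtt{J}^kM/\mathtt{J}^{k+1}M)$ using the idempotents of $R/\mathtt{J}$. Each summand $e_i(\cdots)$ is a vector space over the field $R/\mathfrak{m}_i$, and it is artinian, so it is finite dimensional.

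Hence every factor of the filtration has finite length. Since length is additive along the finite filtration, $M$ has finite length.

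The main point needing care is this last step. Artinian modules over a semisimple ring that is a product of fields have finite length, but this requires splitting along the idempotents rather than treating $R/\mathtt{J}$ as a single field. Alternatively, I could invoke Hopkins: $R/\mathtt{J}^r$ is noetherian, and since $M$ is artinian it is finitely cogenerated. Finite length then follows because, over an artinian ring, artinian modules have finite length exactly when they are finitely generated. The filtration argument above avoids needing finite generation separately, so that is the route I would follow. Note that completeness of $R$ is not needed.
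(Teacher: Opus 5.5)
Your proof is correct, but after the first step it follows a different route from the paper's. The first step is the same in both: the chain $\mathtt{J}^nM$ stabilizes by DCC, and $\mathtt{J}$-separatedness forces $\mathtt{J}^rM=0$.

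\textbf{The paper's route.} It passes through semilocal Matlis duality. It forms $D(M)={}^AHom_R(M,\,R^*)$ and uses Ooishi's [Ooi] 1.6 to know that $D(M)$ is noetherian. Since $\mathtt{J}^nD(M)=0$, $D(M)$ is a finite sum of cyclic modules over the artinian ring $R/\mathtt{J}^n$, hence of finite length. Finite length is then transferred back to $M$ by Ooishi's duality results.

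\textbf{Your route.} You work directly on $M$ with the filtration by the $\mathtt{J}^kM$. Each factor $\mathtt{J}^kM/\mathtt{J}^{k+1}M$ is an artinian module over $R/\mathtt{J}\cong\bigoplus_i R/\mathfrak{m}_i$. Splitting it along the idempotents reduces everything to artinian vector spaces over the fields $R/\mathfrak{m}_i$, which are finite dimensional.

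\textbf{Comparison.} The idempotent splitting is exactly what overcomes the obstruction the paper cites, namely that $R/\mathtt{J}$ is no longer a field. It makes your argument more elementary and more general than the paper's. You need neither $R^*$ nor any duality theorem. You need neither completeness nor even noetherianity of $R$, only that $R$ has finitely many maximal ideals. It also shows that the paper's remark that Ooishi's duality is \emph{necessary} for this proposition is too strong. The paper's route instead fits its overall program of running everything through the semilocal Matlis/Macdonald duality machinery.

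Your closing remark about Hopkins and finite cogeneration is superfluous. Over an artinian ring every artinian module already has finite length, by your own filtration argument. Since you do not rely on that remark, it does no harm.
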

	\begin{proof} Since is artinian and $\cap\mathtt{J}^nM=0$, we must have $\mathtt{J}^nM=0$ for some $n\geq 0$. Let $D(M)=\text{ }^AHom_R(M,\,R^*)$ be the $R$-module of all homomorphisms (not necessarily continuous) from $M$ into $R^*$. Since $M$ is artinian, $D(M)$ is noetherian by [Ooi] 1.6 (3) and $\mathtt{J}^nD(M)=0$. Suppose $D(M)$ is generated by $y_1,\dots,y_r$. Then each $Ry_i$ is annihilated by $\mathtt{J}^n$, hence is of finite length hence $\displaystyle D(M)=\sum_{i=1}^rRy_i$ is of finite length. Hence by [Ooi] 1.6 (2),(3) and (5) $M$ is of finite length.
	\end{proof}	
In the proof above it was necessary to use Matlis dual results obtained by A. Ooishi for the semilocal case instead of results obtained originally by E. Matlis for the local case.

	\begin{prop}
	
	Let $M$ be an $R$-module. If $M$ is linearly compact, semi-discrete and $\mathtt{J}$-separated then $M$ is noetherian.
\end{prop}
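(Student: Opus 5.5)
Following the proof of Theorem 1.6, I would first invoke the main Theorem of [Z\"os]. Since $M$ is linearly compact and semi-discrete, it gives a finitely generated submodule $B\subseteq M$ such that $M/B$ is artinian. Noetherianity is preserved under extensions and $B$ is already noetherian, since $R$ is noetherian and $B$ is finitely generated. So the whole proof reduces to showing that the artinian module $A=M/B$ is noetherian. By Proposition 2.1 it suffices to show that $A$ is $\mathtt{J}$-separated, for then $A$ has finite length.

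The key step, and the main obstacle, is passing $\mathtt{J}$-separatedness from $M$ to $M/B$. This is not automatic for quotients, so the idea is to use linear compactness together with Artin--Rees. Put $N=\bigcap_n(\mathtt{J}^nM+B)$, so that $N/B=\bigcap_n\mathtt{J}^n A$. As $A$ is artinian, the chain $\mathtt{J}^nA$ stabilizes, so $N=\mathtt{J}^kM+B$ for large $k$, and $\mathtt{J}N+B=N$.

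I would then show $\bigcap_n\mathtt{J}^n M$ controls $N$, as follows. Since $M$ is semi-discrete, every submodule is closed. Fix $x\in N$. The cosets $(x+B)\cap\mathtt{J}^nM$, i.e.\ $\{b\in B: x-b\in \mathtt{J}^nM\}$ translated, are nonempty closed cosets and decreasing. By linear compactness their intersection is nonempty, giving $b\in B$ with $x-b\in\bigcap_n\mathtt{J}^nM=0$. Hence $x\in B$, so $N=B$ and $A$ is $\mathtt{J}$-separated.

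Each coset here is a coset of the closed submodule $B\cap\mathtt{J}^nM$ inside $B$, and $B$ is linearly compact by Remark 1.4(ii), since it is closed. In this form Artin--Rees is not even needed; only linear compactness and semi-discreteness are used.

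With $A$ artinian and $\mathtt{J}$-separated, Proposition 2.1 shows $A$ has finite length and hence is noetherian. Then $M$, an extension of the noetherian module $A$ by the noetherian module $B$, is noetherian.
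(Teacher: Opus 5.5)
Your proof is correct and follows the paper's route. Zöschinger gives a finitely generated $B$ with $M/B$ artinian, Proposition 2.1 gives finite length for $M/B$ once it is $\mathtt{J}$-separated, and noetherianity passes through the extension; the only difference is that the paper cites [Mac] 7.1 for the $\mathtt{J}$-separatedness of $M/B$, while you prove it directly via the decreasing nonempty closed cosets $(x+B)\cap\mathtt{J}^nM$ in the linearly compact module $M$, which is a valid argument (the stabilization $N=\mathtt{J}^kM+B$ and the Artin--Rees remark are not needed).
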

\begin{proof}
	
	 Since $M$ is linearly compact and semi-discrete there is $N$ a finitely generated submodule of $M$ such that $M/N$ is artinian module, by [Z{\"o}s] Theorem. Since $R$ is noetherian we have that $N$ is noetherian. By [Mac] 7.1 and Proposition 2.1 $M/N$ is of finite length. Thus, $M/N$ is noetherian and considering the exact sequence \[0\rightarrow N\rightarrow M\rightarrow M/N\rightarrow0\]follows that $M$ is noetherian.
\end{proof}	
In the original proof I. G. Macdonald works with a module which is a finite-dimensional $R/\mathfrak{m}$ vector space, which is not the case in our setting.

	
		
		
		
	
	

	If $M$ is any $R$-module, then the \textbf{Macdonald Dual} of $M$ is \newline $M^*=Hom_R(M,\,R^*)$, the module of all continuous homomorphisms of $M$ into $R^*$,  the \textbf{character-module} of $M$, topologized as in [Mac] 8.1.     
	
	If $S$ is any subset of $M$, $S^0$ denotes the annihilator of $S$ in $M^*$, that is, the set of all $u\in M^*$ such that $u(S)=0$. If $T$ is any subset of $M^*$, $^0T$ denotes the annihilator of $T$ in $M$, that is, the set of all $x\in M$ such that $u(x)=0$ for all $u\in T$. Note that $S^0$ is a submodule of $M^*$ and $^0T$ is a submodule of $M$.
	
	Since $R^*$ is discrete, a fundamental system of neighborhoods of zero of $M^*$ can be formed by the submodules $C^0$ where $C$ runs through the set of linearly compact $\mathtt{J}$-separated submodules of $M$. 
	


	\begin{remark} \textup{Let $M$ be an $R$-module. Each $x\in M$ defines a homomorphism $\omega(x):M^*\rightarrow R^*$ by the rule $\omega(x)(u)=u(x)$. The kernel of $\omega(x)$ is $<x>^0$, which is open in $M^*$ since $<x>$ is linearly compact and $\mathtt{J}$-separated and therefore $\omega(x)$ is a character of $M^*$. Hence we have a homomorphism $$\omega:M\rightarrow M^{**}.$$}
		\textup{If $\omega(x)=0$ then $u(x)=0$ for all $u\in M^*$, hence $x=0$ by Proposition $2.5$ and consequently $\omega$ is an (algebraic) monomorphism. We will characterize those $R$-modules for which $\omega$ is a topological isomorphism. }  
	\end{remark}
	\begin{prop} 
		Let $M$ be a linearly compact $\mathtt{J}$-separated $R$-module and let $N$ be a submodule of finite length in $M^*$. Then $(^0N)^0=N$.
	\end{prop}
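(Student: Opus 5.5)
The inclusion $N\subseteq(^0N)^0$ is immediate from the definitions: if $u\in N$ and $x\in{}^0N$, then $u(x)=0$. So the work lies in the reverse inclusion. The plan is to reduce to a finite-length situation, where duality into $R^*$ can be computed algebraically, following Macdonald's argument in [Mac] but replacing the residue field $R/\mathfrak{m}$ by the Matlis-type dual of Ooishi [Ooi].

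First, I pass to a quotient. Set $L={}^0N$, a closed submodule of $M$: it is the intersection of the kernels of the continuous maps $u\in N$, and $R^*$ is discrete. So $M/L$ is a linearly compact Hausdorff module. Every $u\in N$ factors through $M/L$, and characters of $M$ vanishing on $L$ correspond bijectively to characters of $M/L$. Hence $(^0N)^0$ is identified with $(M/L)^*$, and $N$ with a submodule $\overline N$ of $(M/L)^*$. Under this identification the annihilator of $\overline N$ in $M/L$ is $0$.

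Next, I embed $M/L$ into a product. Since $N$ has finite length, it is finitely generated, say by $u_1,\dots,u_r$. The map $\Phi:M/L\to (R^*)^r$, $\Phi(\bar x)=(u_1(x),\dots,u_r(x))$, is continuous, and it is injective because ${}^0\overline N=0$. Its image $\Phi(M/L)$ is linearly compact by Remark 1.4(iii). Moreover, $\mathtt{J}^t$ kills $N$ for some $t$ (finite length), so $\mathtt{J}^t$ kills $\Phi(M/L)$. Thus $\Phi(M/L)$ is an $\mathtt{J}$-torsion submodule of the artinian module $(R^*)^r$ annihilated by $\mathtt{J}^t$, and by Proposition 2.1 it has finite length. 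Since $R^*$ is discrete and $M/L$ is linearly compact, $\Phi$ is a closed map, hence a topological isomorphism onto its image. So $M/L$ is a discrete module of finite length.

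Finally, I compare lengths. For a discrete module $K$ of finite length, $K^*=D(K)$ is the full Matlis dual of [Ooi]. By [Ooi] 1.6 the dual is exact on finite-length modules, preserves length, and the evaluation map $K\to D(D(K))$ is an isomorphism. Apply $D$ to $\overline N\hookrightarrow D(M/L)$. Because annihilators correspond to kernels of restriction, the condition ${}^0\overline N=0$ says that the natural map $M/L\to D(\overline N)$ is injective, so $\ell(M/L)\le \ell(D(\overline N))=\ell(\overline N)$. On the other hand $\ell(\overline N)\le \ell(D(M/L))=\ell(M/L)$. Hence $\overline N=(M/L)^*$, that is, $N=(^0N)^0$.

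The main obstacle is the middle step: showing that $M/L$ is of finite length and discrete, so that the continuous dual agrees with Ooishi's algebraic dual. In Macdonald's local setting this used vector-space arguments over $R/\mathfrak{m}$; here it must instead go through the finiteness of $\Phi(M/L)\subseteq (R^*)^r$ via Proposition 2.1, together with the closed-map property from Remark 1.4(iii).
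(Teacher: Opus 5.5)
Your proof is correct, but it is organized differently from the paper's. The paper inducts on $\ell(N)$:
\begin{itemize}
\item For $\ell(N)=1$ it writes $N\cong K_j=R/\mathfrak{m}_j$ and embeds $M/{}^0N$ into $N^*\cong K_j$ via $x\mapsto\omega(x)|_N$, using self-duality of $K_j$ from [Ooi] 1.1. It deduces $({}^0N)^0\cong(M/{}^0N)^*\cong K_j$ by [Mac] 9.2, and concludes from $N\subseteq({}^0N)^0$.
\item In the inductive step it chooses $P\subset N$ of length $n-1$. It uses the inductive hypothesis and [Mac] 9.2 to identify $({}^0P)^*$ with $M^*/P$. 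It then applies the length-one case to $N/P$ and the closed (hence linearly compact, $\mathtt{J}$-separated) submodule ${}^0P$.
\end{itemize}

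You instead run the base-case comparison once, for arbitrary length. Three facts squeeze $\ell(M/L)=\ell(\overline N)$ and force $\overline N=(M/L)^*$:
\begin{itemize}
\item the injection $M/L\hookrightarrow D(\overline N)$;
\item the inclusion $\overline N\subseteq D(M/L)$;
\item the fact that $D=Hom_R(-,R^*)$ preserves length on finite-length modules (by exactness plus $D(K_j)\cong K_j$).
\end{itemize}
This avoids the induction and the identification $({}^0P)^*\cong M^*/P$. The price is that you need length preservation for all finite-length modules rather than only self-duality of the simple ones, though that follows at once.

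Your middle step is more elaborate than necessary. The injection $M/L\hookrightarrow D(\overline N)$ already gives finite length. Moreover, $M/L$ is discrete simply because $L=\bigcap_{i=1}^r\ker u_i$ is a finite intersection of open submodules. Equivalently, $\ker\Phi=0$ is the preimage of the open set $\{0\}$ of $(R^*)^r$. So Proposition 2.1 and the closed-map property of Remark 1.4(iii) can be dropped.

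Once streamlined this way, your proof never uses that $M$ is linearly compact or $\mathtt{J}$-separated. It therefore shows that $({}^0N)^0=N$ holds for every finite-length $N\subseteq M^*$, with $M$ an arbitrary Hausdorff linearly topologized module. This is the analogue of the linear-algebra fact for finite-dimensional subspaces of a dual space.
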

	\begin{proof} Let $n$ be the length of $N$. We will use induction on $n$. Consider first $n=1$. Then $N\cong R/\mathfrak{m}_j=K_j$ for some $j$. Indeed, since $N$ is simple, there is $x\in N$ such that $a\mapsto ax$, where $a\in R$, induces the isomorphism $R/Ann(x)\cong N$ and $R/Ann(x)$ is a simple $R$-module, hence it is a field and $Ann(x)=\mathfrak{m}_j$ for some $j$. Now, if $x\in M$ then $\omega(x)|N$ is a character of $N$, hence $x\mapsto\omega(x)|N$ is a homomorphism $M\rightarrow N^*$ whose kernel is $^0N$. It follows that $M/^0N$ is isomorphic to a submodule of $N^*$. Since $N\cong K_j$ and by [Ooi] 1.1 (2) we have that $K_j\cong K^*_j$ then  $N^*\cong K_j$ and therefore $M/^0N\cong K_j$ (otherwise we should have $M=\text{}^0N$ and thus $N=0$). Hence $(M/^0N)^*\cong K_j$ and therefore by [Mac] 9.2 $(^0N)^0\cong K_j$. Since $N\subseteq(^0N)^0$ we have that $N=(^0N)^0$. Now suppose the result is true for all submodules os length $<n$ in $M^*$ ($n>1$). If $N$ is a submodule of length $n$ in $M^*$ we have that it has a submodule $P$ of length $n-1$. By the inductive hypothesis $(^0P)^0=P$ and hence $(^0P)^*\cong M^*/P$ by [Mac] 9.2. If we identify $(^0P)^*$ and $M^*/P$ by this isomorphism, the annihilator $^0(N/P)$ of $(N/P)$ in $^0P$ is $^0N$. Since $N/P$ is of length $1$ and $^0P$ is linearly compact and $\mathtt{J}$-separated (since $^0P$ is closed in $M$, by [Mac] 9.1) we have that, by the case $n=1$, $(^0(N/P))^0=N/P$. But $(^0(N/P))^0=(^0N)^0/P$ and therefore $(^0N)^0=N$. 
	\end{proof}
In the above proof it was necessary to use Matlis duality obtained by A. Ooishi in the semilocal case and that $N\cong R/\mathfrak{m}_j=K_j$ for some $j$.
	\begin{prop} 
	If $M$ is linearly compact and $\mathtt{J}$-separated then $\omega(M)=M^{**}$.
	\end{prop}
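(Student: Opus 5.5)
Following Macdonald's argument ([Mac] 9.3), the plan is to take an arbitrary character $\phi\in M^{**}$ and produce $x\in M$ with $\phi=\omega(x)$. I will use linear compactness of $M$ to pass from local approximations, valid on finite length pieces of $M^*$, to a global element.

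Since $\phi:M^*\rightarrow R^*$ is continuous and $R^*$ is discrete, $\ker\phi$ is open in $M^*$. Hence $\ker\phi\supseteq C^0$ for some linearly compact $\mathtt{J}$-separated submodule $C$ of $M$. Replacing $M$ by $C$ is not directly possible, so I work instead with the finitely generated submodules $N$ of $M^*$.

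First I show that each finitely generated $N\subseteq M^*$ has finite length. Each $u\in M^*$ has image in $R^*$, and $\phi(Ru)\subseteq R^*$. More usefully, $\phi|_N$ factors through $N/(N\cap C^0)$. I will instead take the finite length subquotients directly. For $u\in M^*$, $u(C)$ is a linearly compact $\mathtt{J}$-separated submodule of the artinian module $R^*$, and $R^*$ is discrete, so $u(C)$ is semi-discrete. By Proposition 2.1 it has finite length, and hence $\mathtt{J}^r u(C)=0$ for some $r$. Thus the image of $Ru$ in $C^*$ is killed by a power of $\mathtt{J}$ and is cyclic, so it has finite length.

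Working in $C^*\cong M^*/C^0$ (by [Mac] 9.2), every finitely generated submodule $N$ of $C^*$ therefore has finite length, and $\phi$ induces a character $\bar\phi$ of $C^*$. It suffices to find $x\in C$ with $\bar\phi=\omega(x)$ on $C^*$. Indeed, $\phi$ and $\omega(x)$ both vanish on $C^0$, so they then agree on $M^*$.

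For each finitely generated $N\subseteq C^*$, let $F_N=\{x\in C:\ u(x)=\bar\phi(u)\ \text{for all }u\in N\}$. This is a coset of ${}^0N$, which is closed in $C$ ([Mac] 9.1), so $F_N$ is a closed coset. The step I expect to be the main obstacle is showing $F_N\neq\emptyset$, and I would use Proposition 2.7 there.

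The map $C\rightarrow N^*$, $x\mapsto\omega(x)|_N$, has kernel ${}^0N$. Its image corresponds, under Matlis duality, to $N/({}^0N)^0$ in the sense of [Ooi] 1.1, 1.6 for finite length modules. Since $({}^0N)^0=N$ by Proposition 2.7, the image is all of $N^*$. Concretely, if the image $I\subseteq N^*$ were proper, then by Ooishi's duality for finite length modules some nonzero element of $N$ would be killed by every element of $I$. That element would lie in $({}^0N)^0\setminus N$, which is impossible. Hence $\bar\phi|_N\in N^*$ is of the form $\omega(x)|_N$, so $F_N\neq\emptyset$.

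The family $\{F_N\}$ has the finite intersection property, since $F_{N_1}\cap\cdots\cap F_{N_k}\supseteq F_{N_1+\cdots+N_k}\neq\emptyset$. By linear compactness of $C$ there is $x\in\bigcap_N F_N$, and then $\omega(x)=\bar\phi$, which proves $\omega(M)=M^{**}$.
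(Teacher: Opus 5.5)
Your architecture matches the paper's (Macdonald's): finitely generated submodules $N$ of the dual have finite length, the map $x\mapsto\omega(x)|_N$ is onto $N^*$, and linear compactness applied to the closed cosets $F_N$ (the paper's $x_N+{}^0N$) yields a global preimage. However, the justification of the step you yourself flagged as the main obstacle does not work as written. The nonzero $n\in N$ killed by every element of the image $I$ is an element of $N$, so it cannot lie in $({}^0N)^0\setminus N$. The contradiction you state is therefore not a valid deduction.

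The repair is immediate. Saying $n$ is killed by $I$ means $n(x)=\omega(x)(n)=0$ for every $x\in C$. So $n$ is the zero character of $C$, contradicting $n\neq0$. Read this way, surjectivity holds and Proposition 2.4 (your ``Proposition 2.7'') is not needed at all. If you do want to use $({}^0N)^0=N$, argue as the paper does:
\begin{itemize}
\item $C/{}^0N$ embeds in $N^*$;
\item $(C/{}^0N)^*\cong({}^0N)^0=N$ by [Mac] 9.2;
\item hence $C/{}^0N\cong N^*$ by Ooishi's reflexivity for modules of finite length;
\item comparing lengths, the embedding is onto.
\end{itemize}

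There are two smaller points. First, you assert that $u(C)$ is $\mathtt{J}$-separated. A quotient of a $\mathtt{J}$-separated module need not be, and this is exactly where linear compactness enters, so it must be argued. Each $\mathtt{J}^kC$ is closed, being the continuous image of a finite power of $C$. Now suppose $x\in\bigcap_k(\mathtt{J}^kC+\ker u)$. Then the closed cosets $x+\ker u$ and $\mathtt{J}^kC$ have the finite intersection property. Hence $x+\ker u$ meets $\bigcap_k\mathtt{J}^kC=0$, i.e.\ $x\in\ker u$. Semi-discreteness of $u(C)$ plays no role here. With this supplied, your route to finite length ($\mathtt{J}$-torsion plus cyclic generators over the artinian ring $R/\mathtt{J}^r$) is somewhat more elementary than the paper's appeal to [Mac] 7.3, [Mac] 5.4 and Theorem 1.7.

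Second, the passage to $C$ is unnecessary. Since $M$ is itself linearly compact and $\mathtt{J}$-separated, $M^0=0$ is open, so $M^*$ is discrete and you may simply take $C=M$.
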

	\begin{proof} Let $N$ be a noetherian submodule of $M^*$. By [Mac] 9.4 $M^*$ is discrete and hence $N$ is discrete. Furthermore $N$ is linearly compact by [Mac] 7.3 and it is artinian by Theorem 1.7 and [Mac] 5.4. So $N$ is of finite length and therefore $N=(^0N)^0\cong(M/ ^0N)^*$ by Proposition 2.4 and [Mac] 9.2. Consider the homomorphism $\omega_N:M\rightarrow N^*$ given by $x\mapsto\omega(x)|_{N}$, then $Ker(\omega_N)=$ $^0N$ and hence $(M/^0N)$ is a submodule of $N^*$ which is artinian by [Ooi] 1.6 (2). Hence, by [Ooi] 1.6 (5), $(M/^0N)^{**}\cong M/^0N\cong N^*$. Also $\omega_N$ is an epimorphism, that is, if $u\in M^{**}$ then $u|_N\in N^*$ and there is $x_N\in M$ such that $\omega(x_N)|_N=u|_N$. Consider the cosets  $x_N+\,^0N$ of $M$. As $N$ runs through the noetherian submodules of $M^*$ these are closed cosets by [Mac] 9.1 and they have the finite intersection property. Indeed, for if $N_1,\dots,N_r$ are noetherian submodules of $M^*$ then $N_1+\dots+N_r$ is noetherian, hence there is $x\in M$ such that $\omega(x)|_{N_1+\dots+N_r}=u|_{N_1+\dots+N_r}$. For each $i$ there is $x_{N_i}\in M$ such that $\omega(x_{N_i})|_{N_i}=u|_{N_i}$, also $\omega(x)|_{N_i}=u|_{N_i}$, since $N_i\subseteq N_1+\dots+N_r$. Hence $\omega(x)|_{N_i}=\omega(x_{N_i})|_{N_i}$ and $x-x_{N_i}\in Ker(\omega_{N_i})=$ $^0N_i$, that is, $\displaystyle x\in\bigcap_{i=1}^r(x_{N_i}\,+^0N_i)$.
		
		Since $M$ is linearly compact it follows that there is an element $y$ of $M$ such that $y\in x_N+\,^0N$ for all noetherian submodules $N$ of $M^*$. Hence $\omega(y)$ agrees with $u$ on every noetherian submodule of $M^*$ and therefore $\omega(x)=u$. Thus $\omega$ is an epimorphism. 	  
	\end{proof} 
	To prove the above Proposition we use duality and reflexivity results obtained by A. Ooishi. Furthermore, it was important to realize that the Matlis dual and the Macdonald dual coincide when the module is semi-discrete, which is true for noetherian modules and artinian modules by Proposition 2.2 and [Zel] Proposition 5.

By means of the semilocal tools constructed in this work, we establish the validity of Macdonald's main duality theorem [Mac] in the semilocal setting.

	\begin{prop} 	Let $(R,\,\mathfrak{m}_1,\,...,\,\mathfrak{m}_n)$ be a semilocal $\mathtt{J}$-adically complete ring, where $\mathtt{J}=\bigcap_{i=1}^n\mathfrak{m}_i$.
		If $\{M_i\}$ is a family of linearly compact $R$-modules, we have topological isomorphisms
		$$(\prod_{i\in I}M_i)^*\cong\bigoplus_{i\in I}M_i^*,$$
		$$(\varprojlim M_i)^*\cong\varinjlim M_i^*.$$
	\end{prop}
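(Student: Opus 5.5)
We follow Macdonald's argument in [Mac] (the results in \S 8--9 leading to his duality theorem), replacing $E(R/\mathfrak{m})$ by $R^*=E(R/\mathtt{J})$ and the local tools by the semilocal ones of Sections 1 and 2.

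\emph{Products.} Let $P=\prod_{i\in I}M_i$ with the product topology; it is linearly compact by Tychonoff for linearly compact modules [Mac] 3.x. Let $p_i,q_i$ be the projection and inclusion maps. Define $\Phi:\bigoplus_i M_i^*\to P^*$ by $\Phi((u_i))(x)=\sum_i u_i(p_i(x))$; the sum is finite and $\Phi((u_i))$ is continuous as a finite sum of continuous maps. For injectivity, compose with the inclusions $q_j$. For surjectivity, take $u\in P^*$. Then $\ker u$ is open because $R^*$ is discrete, so it contains a basic open submodule $\prod_{i\notin F}M_i\times\prod_{i\in F}U_i$ with $F$ finite. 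Hence $u$ vanishes on $\prod_{i\notin F}M_i$, and so $u=\sum_{i\in F}(u\circ q_i)\circ p_i$. This gives an algebraic isomorphism.

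For the topologies: the neighbourhoods of $0$ in $P^*$ are the $C^0$ with $C\subseteq P$ linearly compact and $\mathtt{J}$-separated. Given such $C$, the projections $p_i(C)$ are linearly compact and $\mathtt{J}$-separated. We have $C\subseteq\prod_i p_i(C)$, and $\Phi^{-1}((\prod_i p_i(C))^0)=\bigoplus_i p_i(C)^0$, which is open in the direct sum topology of [Mac] 8.1. Conversely, for a basic neighbourhood $\bigoplus_i C_i^0$ of the sum, $\prod_i C_i$ is linearly compact, and $\mathtt{J}$-separated because $\mathtt{J}^n\prod C_i\subseteq\prod \mathtt{J}^nC_i$. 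Its annihilator corresponds to $\bigoplus_i C_i^0$. So $\Phi$ is a topological isomorphism.

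\emph{Inverse limits.} Write $L=\varprojlim M_i\subseteq\prod M_i$. It is closed, hence linearly compact, and the maps $\pi_i:L\to M_i$ are continuous. The maps $\pi_i^*:M_i^*\to L^*$ are compatible, so they induce $\Psi:\varinjlim M_i^*\to L^*$. To control it, first replace $M_i$ by $\pi_i(L)$. These images are closed by Remark 1.4(iii), and their inverse limit is still $L$. For linearly compact systems we have $\pi_i(L)=\bigcap_{j\ge i}\mathrm{im}(M_j\to M_i)$ (Mittag-Leffler for linearly compact modules, [Mac] 3.x). Restriction $M_i^*\to\pi_i(L)^*$ is surjective: a character of the closed submodule $\pi_i(L)$ extends to $M_i$ by [Mac] 9.2/9.3, whose argument uses only injectivity of $R^*$ and Proposition 2.3.

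\emph{Surjectivity of $\Psi$.} Given $u\in L^*$, $\ker u$ is open in $L$. It therefore contains $L\cap\pi_i^{-1}(U)$ for some $i$ and some open $U\subseteq M_i$. The hardest step is to show that in fact $\ker u\supseteq\ker\pi_i$ for suitable $i$. We intend to obtain this from linear compactness, using the family of closed submodules $\ker\pi_i$, whose intersection is $0$, together with the finite intersection property: the images $\ker\pi_i/(\ker\pi_i\cap\ker u)$ are a descending family of submodules of the discrete linearly compact quotient $L/\ker u$. Since $R^*$ is artinian (Remark 1.5), $L/\ker u$ embeds in $R^*$ and is artinian, so the family stabilises. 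Its stable value is $\bigcap_i$ of these images, which is $0$ by linear compactness ([Mac] 3.x on intersections of images). Once $\ker u\supseteq\ker\pi_i$, $u$ factors through $\pi_i(L)$, continuously because $\pi_i$ is closed, and then extends to $M_i$.

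\emph{Injectivity and topology.} If $v\in M_i^*$ vanishes on $\pi_i(L)$, then it vanishes on $\bigcap_j\mathrm{im}(M_j\to M_i)$. By the same artinian-stabilisation argument in $M_i/\ker v$, it already vanishes on one image $\mathrm{im}(M_j\to M_i)$. Hence its image in $M_j^*$ is $0$, so its class in the direct limit is $0$. The topological part is handled as in the product case, by matching the annihilators $C^0$ of linearly compact $\mathtt{J}$-separated $C\subseteq L$ with those of the $\pi_i(C)$. Alternatively, one can realise $\varprojlim$ as a kernel of a map between products and use the exactness of $(-)^*$ on linearly compact modules ([Mac] 9.2), together with the product case.
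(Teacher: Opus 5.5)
Your proposal is correct and follows essentially the route the paper intends: the paper gives no separate proof of Proposition 2.6, only the assertion that Macdonald's argument carries over once $E(R/\mathfrak{m})$ is replaced by $R^*=E(R/\mathtt{J})$ (injective, artinian, $\mathtt{J}$-torsion) and the local tools by their semilocal counterparts, which is exactly what you carry out. Two small remarks: first, the step you call the hardest is immediate, since $\ker\pi_i\subseteq L\cap\pi_i^{-1}(U)\subseteq\ker u$; second, matching $C^0$ with $\pi_i(C)^0$ only shows that $\Psi$ is continuous, so openness of $\Psi$ should come from your second route, namely the topological form of [Mac] 9.2 combined with the product case (a character of $L$ vanishing on $L\cap\prod_i C_i$ extends to a character of $\prod_i M_i$ vanishing on $\prod_i C_i$; this uses injectivity of $R^*$ and the fact that a Hausdorff linear topology on an artinian module is discrete, and that same fact is what really makes your $\bar u$ continuous).
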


\section{Applications to Local Homology and Local cohomology}


We recall the definition of Local Homology of [C-N2]. Let $I$ be an ideal of $R$, the \textit{i th local homology} modules $H^{I}_i(M)$ of an $R$-module $M$ with respect to $I$ is defined by (see [C-N2]  3.1)\[H^I_{i}{(M)}=\varprojlim_{t} \Tor^R_{i}(R/I^t,\,M).\]
Note that $H^{I}_0(M)\cong\Lambda_I(M)$, in which $\Lambda_I(M)=\varprojlim_tM/I^tM$ the $I$-adic completion of $M$.
Let $M$ be a linearly compact $R$-module. Then $Tor^R_i(R/I^t,\,M)$ is also a linearly compact
$R$-module by the topology defined as in [C-N1] 2.6, so we have an induced topology on the local homology
module $H^I_i(M)$.

Using Theorem 3.8 of [C-N1] we can prove the following criterion for a finitely generated module on a semilocal Noetherian ring to be linearly compact. 

\begin{prop} Let $(R,\,\mathfrak{m}_1,\,...,\,\mathfrak{m}_n)$ be a semilocal noetherian ring and $M$ a finitely generated $R$-module. Then $M$ is a linearly compact $R$-module if and only if $M$ is complete with respect to the $\mathtt{J}$-adic topology, where $\mathtt{J}=\bigcap_{i=1}^n\mathfrak{m}_i$.
\end{prop}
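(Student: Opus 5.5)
We prove the two implications separately, reducing as much as possible to the structure results of Sections 1 and 2 and to Theorem 3.8 of [C-N1]. We read ``linearly compact'' for a finitely generated $M$ as meaning with respect to the $\mathtt{J}$-adic topology; in the local case this is exactly the setting of [C-N1] 3.8. The first step is to check that $M$ with the $\mathtt{J}$-adic topology $\{\mathtt{J}^tM\}_t$ is a Hausdorff linearly topologized module. Conditions (1.1.1) and (1.1.2) are clear. Hausdorffness follows from Krull's intersection theorem, since $\mathtt{J}$ is the Jacobson radical and $M$ is finitely generated, so $\bigcap_t\mathtt{J}^tM=0$.

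\textbf{($\Leftarrow$)} Suppose $M\cong\varprojlim_t M/\mathtt{J}^tM$ topologically. Each $M/\mathtt{J}^tM$ is a finitely generated module over $R/\mathtt{J}^t$. By the Chinese Remainder decomposition, $R/\mathtt{J}^t\cong\bigoplus_i R_{\mathfrak{m}_i}/\mathfrak{m}_i^tR_{\mathfrak{m}_i}$, and this ring is artinian. So $M/\mathtt{J}^tM$ is of finite length, hence artinian, and therefore linearly compact with the discrete topology by [Zel] Proposition 5. An inverse limit of linearly compact modules is linearly compact ([Mac] 5.5). Hence $M$ is linearly compact.

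\textbf{($\Rightarrow$)} Suppose $M$ is linearly compact. The canonical map $\varphi:M\to\widehat{M}=\varprojlim_t M/\mathtt{J}^tM$ is injective because $M$ is $\mathtt{J}$-separated, so it remains to show surjectivity. Take a compatible sequence $(x_t+\mathtt{J}^tM)_t$ in $\widehat M$. The cosets $x_t+\mathtt{J}^tM$ are cosets of open, hence closed, submodules. They are nested, so they have the finite intersection property. Linear compactness then gives $y\in\bigcap_t(x_t+\mathtt{J}^tM)$, and $\varphi(y)$ is the given element. Thus $\varphi$ is a bijection, and it is a homeomorphism because both sides carry the $\mathtt{J}$-adic topology. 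This is precisely the argument of [C-N1] 3.8, and no step uses locality.

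\textbf{Main obstacle.} The delicate point is that [C-N1] 3.8 allows an arbitrary Hausdorff linear topology on $M$, whereas the completeness criterion concerns the $\mathtt{J}$-adic one. If the topology is not assumed to be $\mathtt{J}$-adic, we must show that a linearly compact topology on a finitely generated $M$ coincides with the $\mathtt{J}$-adic topology, or at least that each $\mathtt{J}^tM$ is closed. For closedness, we would note that $M/\mathtt{J}^tM$ is of finite length, so every submodule of it is closed in the quotient topology; this is where we use that any Hausdorff linear topology on an artinian module is discrete ([Mac] 3.10, as in Remark 1.5), and in particular for $R^*$. Hence every $\mathtt{J}^tM$ is closed, so the coset argument above still applies.

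In the local case one uses the single maximal ideal $\mathfrak{m}$ here. In the semilocal case we replace it by $\mathtt{J}$ and use the Chinese Remainder decomposition of $R/\mathtt{J}^t$ to guarantee finite length.
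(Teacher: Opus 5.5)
Your main line is the paper's proof. You get $\mathtt{J}$-separatedness from Krull's intersection theorem. For ($\Leftarrow$) you use $M\cong\varprojlim_t M/\mathtt{J}^tM$, with each $M/\mathtt{J}^tM$ of finite length by the Chinese Remainder Theorem. For ($\Rightarrow$) you use the nested-coset argument, which is what the paper imports by citing [C-N1] 3.8. As long as $M$ is given the $\mathtt{J}$-adic topology, this is complete.

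\textbf{The gap.} It is in your ``main obstacle'' paragraph. That paragraph matters, because the paper (through [C-N1] 3.8) only assumes $M$ is linearly compact in its given Hausdorff linear topology. Your argument that $\mathtt{J}^tM$ is closed is circular. The quotient topology on $M/\mathtt{J}^tM$ is Hausdorff exactly when $\mathtt{J}^tM$ is closed. So you cannot invoke ``a Hausdorff linear topology on an artinian module is discrete'' before you know closedness. Without it you only learn that the closure $\overline{\mathtt{J}^tM}$ is open, and running the coset argument with $\overline{\mathtt{J}^tM}$ does not make $\varphi$ surjective.

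\textbf{The fix.} Use linear compactness directly. Let $a_1,\dots,a_k$ generate $\mathtt{J}^t$. Then $\mathtt{J}^tM$ is the image of the continuous homomorphism $M^k\to M$, $(m_j)\mapsto\sum_j a_jm_j$. Since $M^k$ is linearly compact (Remark 1.4(iv), by induction on $k$), $\mathtt{J}^tM$ is linearly compact, hence closed by Remark 1.4(iii),(i). With this, your coset argument works for any such topology.

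Once $\mathtt{J}^tM$ is known to be closed, your finite-length remark legitimately shows it is open. Also, condition (1.1.2), for $R$ carrying the $\mathtt{J}$-adic topology, forces every open submodule of the finitely generated $M$ to contain some $\mathtt{J}^tM$. So the given topology is exactly the $\mathtt{J}$-adic one. This justifies your homeomorphism claim and shows your initial reading loses no generality.
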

\begin{proof} Since $M$ is a finitely generated $R$-module, $M$ is $\mathtt{J}$-separated by Krull's intersection theorem ([G-S] 1.5). Thus, if $M$ is a linearly compact $R$-module, $\Lambda_{\mathtt{J}}(M)\cong M$ by [C-N1] 3.8. Conversely, if $M$ is complete in $\mathtt{J}$-adic topology, we have $M\cong\varprojlim_tM/\mathtt{J}^tM$. Therefore is a linearly compact $R$-module by [Mac] 3.7, as $M/\mathtt{J}^tM$ are artinian $R$-modules for all $t>0$, since $M/\mathfrak{m}_i^tM$ are artinian $R$-modules for all $t>0$ and for all $i=1,\dots,\,n$ and by Chinese Remainder Theorem.
\end{proof}

Denote by $L(M)$ the sum of all artinian submodules of $M$.
 The $I$-torsion functor $\Gamma_I$ is defined by $\Gamma_I(M)=\bigcup_{t>0}(0:_M\,I^t)$.

The following result is proved for the local case in [C-N1] 4.5 as an immediate consequence of the other two results. Here, for the semilocal case, a different proof was needed.

\begin{prop}
	Let $(R,\,\mathfrak{m}_1,\,...,\,\mathfrak{m}_n)$ be a semilocal noetherian ring, $\mathtt{J}$ the Jacobson radical of $R$ and $M$ a semi-discrete linearly compact $R$-module. Then \[L(M)=L(\Gamma_{\mathtt{J}}(M))\]and\[H_i^{\mathtt{J}}(M)\cong H_i^{\mathtt{J}}(\Gamma_{\mathtt{J}}(M))\]for all $i>0$.
\end{prop}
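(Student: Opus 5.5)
Two assertions have to be proved: the equality $L(M)=L(\Gamma_{\mathtt{J}}(M))$, and the isomorphisms of local homology for $i>0$.

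\emph{The equality $L(M)=L(\Gamma_{\mathtt{J}}(M))$.} The inclusion $L(\Gamma_{\mathtt{J}}(M))\subseteq L(M)$ is trivial. For the reverse inclusion, let $A\subseteq M$ be an artinian submodule. I would show $A\subseteq\Gamma_{\mathtt{J}}(M)$. Each $x\in A$ generates an artinian cyclic module $R/(0:_Rx)$. Such a module has finite length, so its annihilator contains a product of maximal ideals, hence a power $\mathtt{J}^t$. Thus $A$ is $\mathtt{J}$-torsion, every artinian submodule lies in $\Gamma_{\mathtt{J}}(M)$, and $L(M)\subseteq L(\Gamma_{\mathtt{J}}(M))$.

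Conversely, I would observe that $\Gamma_{\mathtt{J}}(M)$ is itself artinian, so that $L(M)=\Gamma_{\mathtt{J}}(M)$. Since $M$ is semi-discrete, $\Gamma_{\mathtt{J}}(M)$ is a closed submodule. By Remark 1.4(ii) it is linearly compact, and it is semi-discrete with the induced topology. Theorem 1.7 then shows it is artinian. This identification will be used in the second part.

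\emph{The isomorphisms $H_i^{\mathtt{J}}(M)\cong H_i^{\mathtt{J}}(\Gamma_{\mathtt{J}}(M))$ for $i>0$.} Set $N=M/\Gamma_{\mathtt{J}}(M)$. It is linearly compact and semi-discrete, being a quotient by a closed submodule (Remark 1.4(iv)). The long exact sequence of $\Tor^R_\bullet(R/\mathtt{J}^t,-)$ applied to $0\to\Gamma_{\mathtt{J}}(M)\to M\to N\to0$ consists of linearly compact modules and continuous maps. By [C-N1] (inverse limits are exact on linearly compact modules, [Mac] 3.11), it passes to a long exact sequence of local homology modules. It therefore suffices to prove $H_i^{\mathtt{J}}(N)=0$ for all $i>0$; I would try to show even $H_i^{\mathtt{J}}(N)=0$ for $i\ge 1$, with $H_0$ handled separately.

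The key step is the structure of $N$. By [Zös] there is a finitely generated $B\subseteq M$ with $M/B$ artinian. The module $N$ satisfies $\Gamma_{\mathtt{J}}(N)=0$: if $\mathtt{J}^t\bar x=0$, then $\mathtt{J}^tx\subseteq\Gamma_{\mathtt{J}}(M)$. Since $\mathtt{J}^t$ is finitely generated, some power of $\mathtt{J}$ kills $x$. Now $N$ has a finitely generated submodule $\bar B$ with $N/\bar B$ artinian, hence $\mathtt{J}$-torsion. I expect that a torsion-free linearly compact semi-discrete module of this shape is $\mathtt{J}$-separated. The plan is to show $\bigcap_t\mathtt{J}^tN$ is an artinian (via linear compactness and Zöschinger) submodule without $\mathtt{J}$-torsion, hence zero. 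Proposition 2.2 (after passing to $\widehat R$, over which $N$ is a module since it is linearly compact) would then give that $N$ is noetherian.

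For a finitely generated module over the complete ring $\widehat R$, the higher local homology vanishes by flatness of completion: $H_i^{\mathtt{J}}(N)\cong\Tor_i(\widehat R,N)=0$ for $i>0$. This is the semilocal analogue of [C-N1] 4.4, whose proof only uses the Artin--Rees lemma. With $H_i^{\mathtt{J}}(N)=0$ for $i\ge1$, the long exact sequence yields the isomorphisms for $i\ge2$. At $i=1$ it yields $0\to H_1^{\mathtt{J}}(\Gamma_{\mathtt{J}}(M))\to H_1^{\mathtt{J}}(M)\to 0$, provided $H_2^{\mathtt{J}}(N)=0$ on the left and $H_1^{\mathtt{J}}(N)=0$ on the right, both of which hold.

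I expect the main obstacle to be the step showing that $N$ is $\mathtt{J}$-separated, hence noetherian. In the local case this step is avoided by citing [C-N1] 4.3–4.4, but in the semilocal case it needs the Zöschinger decomposition together with Theorem 1.7 to control $\bigcap_t\mathtt{J}^tN$.
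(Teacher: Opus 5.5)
Your first part is correct, and it is more direct than the paper's route through $\Gamma_{\mathfrak m_j}(\Gamma_{\mathtt J}(M))=\Gamma_{\mathfrak m_j}(M)$ and Sharp's decomposition. The reduction to $H_i^{\mathtt J}(N)=0$ for $i\geq 1$, via the long exact sequence for $0\to\Gamma_{\mathtt J}(M)\to M\to N\to 0$, is also fine.

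\textbf{The gap.} Your key step fails: $N=M/\Gamma_{\mathtt J}(M)$ need not be $\mathtt J$-separated, and it need not be finitely generated. Here is a counterexample.
\begin{itemize}
\item Take $R=k[[x]]$ (so $n=1$ and $\mathtt J=(x)$) and $M=k((x))$, topologized by the submodules $x^mk[[x]]$, $m\in\mathbb Z$.
\item The only submodules of $M$ are $0$, the $x^mk[[x]]$, and $M$. All of them are closed, so $M$ is semi-discrete.
\item The submodule $k[[x]]$ is open and linearly compact, and the quotient $M/k[[x]]\cong E(k)$ is discrete and artinian. So $M$ is linearly compact by Remark 1.4(iv).
\item Here $\Gamma_{\mathtt J}(M)=0$, so $N=M$. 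Yet $\mathtt JN=N$, hence $\bigcap_t\mathtt J^tN=N$.
\end{itemize}
So the intersection $\bigcap_t\mathtt J^tN$ is neither zero nor artinian, which kills your planned argument at its first claim, and $N$ is not finitely generated. This is exactly the kind of module the paper points to in Remark 4.3(i). The proposition still holds in this example, since $N$ is flat and so $\Tor_i$ vanishes for $i>0$. But it does not hold for the reason you give.

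\textbf{How to close it.} The vanishing $H_i^{\mathtt J}(N)=0$ for $i>0$ has to be proved for modules that are not finitely generated; the paper imports it from the proof of [C-N1] 4.4. One way to do it with your tools, working over $\widehat R$ as you intended:
\begin{itemize}
\item Put $K=\bigcap_t\mathtt J^tN$, which is closed because $N$ is semi-discrete.
\item Linear compactness gives $\mathtt JK=K$. If $\mathtt J=(a_1,\dots,a_k)$ and $y\in K$, the sets $\{z\in(\mathtt J^tN)^k:\sum_j a_jz_j=y\}$ are nonempty closed cosets of $N^k$, decreasing in $t$. So they have a common point, and it lies in $K^k$.
\item Since $K\subseteq\mathtt J^tN$ for all $t$, the module $N/K$ is $\mathtt J$-separated. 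It is also semi-discrete and linearly compact, so it is finitely generated by Proposition 2.2, and $H_i^{\mathtt J}(N/K)=0$ for $i>0$.
\item For $K$: first, $\Gamma_{\mathtt J}(K)=0$. Second, $0:_{K^*}\mathtt J^t\cong(K/\mathtt J^tK)^*=0$, so $\Gamma_{\mathtt J}(K^*)=0$.
\item Both $\mathrm{Ass}(K)$ and $\mathrm{Ass}(K^*)$ are finite (by Z\"oschinger's theorem) and contain no maximal ideal. Prime avoidance then gives $x\in\mathtt J$ with $0:_Kx=0$ and $0:_{K^*}x=0$. Since $0:_{K^*}x\cong(K/xK)^*$, Proposition 1.8 gives $xK=K$, so $x$ acts bijectively on $K$.
\item Consequently $x$ acts bijectively on $H_i^{\mathtt J}(K)$, so $H_i^{\mathtt J}(K)=x^sH_i^{\mathtt J}(K)\subseteq\mathtt J^sH_i^{\mathtt J}(K)$ for every $s$.
\item But $\bigcap_s\mathtt J^sH_i^{\mathtt J}(K)=0$, because $\mathtt J^sH_i^{\mathtt J}(K)$ maps to zero in $\Tor_i^R(R/\mathtt J^s,K)$. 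Hence $H_i^{\mathtt J}(K)=0$ for all $i$.
\item Therefore $H_i^{\mathtt J}(N)\cong H_i^{\mathtt J}(N/K)=0$ for $i\geq 1$, which is what your reduction needs.
\end{itemize}
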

\begin{proof} By [B-S] 1.1.2, we have $\Gamma_{\mathfrak{m}_{j}}(\Gamma_{\mathtt{J}}(M))=\Gamma_{\mathfrak{m}_j}(M)$ for each $j=1,\,\dots,\,n$. Since $\Gamma_{\mathfrak{m}_j}(M)\neq0\Leftrightarrow\Gamma_{\mathfrak{m}_{j}}(\Gamma_{\mathtt{J}}(M))\neq0$, follows by [Sha] 1.4 and [C-N1] 4.3 that $L(M)=L(\Gamma_{\mathtt{J}}(M))$. Furthermore, from proof of [C-N1] 4.4, follows that $H_i^{\mathtt{J}}(M)\cong H_i^{\mathtt{J}}(\Gamma_{\mathtt{J}}(M))$.
\end{proof}

\begin{prop} 
		
	 Let $M$ be an $R$-module. Then for all $i\geq0$,\[L_i^{I}(D(M))\cong D(H^i_I(M))\cong H^I_i(D(M)).\]

\end{prop}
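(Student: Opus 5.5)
The plan is to follow the proof of [C-N1] 3.3 (itself modelled on [C-N2]) in the semilocal setting. Throughout, $D(-)=Hom_R(-,\,R^*)$ denotes the Matlis dual with $R^*=E(R/\mathtt{J})$, and we use only two facts about it: $R^*$ is injective, so $D$ is exact and contravariant; and $D$ converts direct limits into inverse limits, $D(\varinjlim_t N_t)\cong\varprojlim_t D(N_t)$.

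First we prove $D(H^i_I(M))\cong H^I_i(D(M))$. For each $t$ there is the standard Hom–tensor adjunction isomorphism
\[D(Tor^R_i(R/I^t,\,M))\cong Ext^i_R(R/I^t,\,D(M)).\]
It is obtained by applying the exact functor $D$ to a projective resolution $P_\bullet\otimes M$ and using $Hom(P_\bullet\otimes M,R^*)\cong Hom(P_\bullet,D(M))$. This isomorphism is natural in $R/I^t$. Applying $D$ to $H^i_I(M)=\varinjlim_t Ext^i_R(R/I^t,\,M)$ first requires the dual isomorphism
\[D(Ext^i_R(R/I^t,M))\cong Tor_i^R(R/I^t,D(M)).\]
This holds because $R/I^t$ is finitely generated over the noetherian ring $R$ and $R^*$ is injective: take a resolution $F_\bullet$ of $R/I^t$ by finite free modules and use $D(Hom(F_j,M))\cong F_j\otimes D(M)$, which is valid for finite free $F_j$. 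Since $D$ is exact, it commutes with homology. Passing to limits then gives
\[D(H^i_I(M))\cong\varprojlim_t D(Ext^i_R(R/I^t,M))\cong\varprojlim_t Tor^R_i(R/I^t,D(M))=H^I_i(D(M)).\]
We will check that the transition maps correspond under these natural isomorphisms. None of this uses locality; only the injectivity of $R^*$ matters.

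Second we prove $L_i^I(D(M))\cong D(H^i_I(M))$. Take an injective resolution $M\to E^\bullet$. Then $D(E^\bullet)$ is a resolution of $D(M)$, and each $D(E^j)$ is flat, since $Hom$ of an injective into an injective is flat over a noetherian ring. Next we identify $\Lambda_I(D(E))\cong D(\Gamma_I(E))$ for injective $E$, using
\[\Lambda_I(D(E))=\varprojlim_t R/I^t\otimes D(E)\cong\varprojlim_t D(Hom(R/I^t,E))\cong D(\varinjlim_t Hom(R/I^t,E))=D(\Gamma_I(E)).\]
Hence
\[H_i(\Lambda_I(D(E^\bullet)))\cong H_i(D(\Gamma_I(E^\bullet)))\cong D(H^i(\Gamma_I(E^\bullet)))=D(H^i_I(M)).\]

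The main obstacle is that $L^I_i$ is defined through projective resolutions, while $D(E^\bullet)$ is only a flat resolution. We need that left derived functors of $\Lambda_I$ can be computed by flat resolutions, or at least by resolutions by modules of the form $D(E)$ with $E$ injective. The first thing to try is the argument in [C-N2]: by the exact sequences $0\to K\to F\to D(E)\to 0$ with $F$ free, it suffices to show $L^I_i(D(E))=0$ for $i>0$. For this we use $L^I_i(N)\cong H^I_i(N)$ for $N=D(E)$, which follows because $Tor_i(R/I^t,D(E))\cong D(Ext^i(R/I^t,E))=0$ together with a Mittag-Leffler argument on the inverse system. If this is problematic, we fall back on the description of $L^I_i$ via the Čech/Koszul complex of Greenlees–May, which is flat-acyclic. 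In that description locality plays no role, so the argument transfers to the semilocal case with $R^*=E(R/\mathtt{J})$.
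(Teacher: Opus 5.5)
\paragraph{Verdict.} Your proposal is correct. The first isomorphism follows the paper's argument; the identification with $L^I_i(D(M))$ takes a genuinely different, self-contained route.

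\paragraph{First isomorphism.} Your proof of $D(H^i_I(M))\cong H^I_i(D(M))$ is the paper's argument. The paper writes $H^I_i(D(M))\cong\varprojlim_t\Tor^R_i(R/I^t,D(M))\cong\varprojlim_t D(\Ext^i_R(R/I^t,M))\cong D(\varinjlim_t\Ext^i_R(R/I^t,M))=D(H^i_I(M))$. It cites [Mac] 2.6 and [Ann] 4.1 for the two ingredients you prove by hand: the Ext--Tor duality via a finite free resolution of $R/I^t$, and $D(\varinjlim)\cong\varprojlim D$. The isomorphism $D(\Tor)\cong\Ext(-,D(M))$ you write down first is not needed.

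\paragraph{Second isomorphism: comparison of routes.} The paper disposes of the identification with $L^I_i(D(M))$ in one line by citing [Ann] 5.6, a Greenlees--May type duality result. You instead prove it directly: you resolve $D(M)$ by $D(E^\bullet)$, then use $\Lambda_I(D(E))\cong D(\Gamma_I(E))$ and exactness of $D$. Your route is elementary. It shows explicitly that only the injectivity of $R^*$ and the existence of finite free resolutions of $R/I^t$ are used, so the semilocal hypothesis plays no role. The paper's route is shorter but hides this inside the citation.

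\paragraph{Correcting the acyclicity step.} Because $\Lambda_I$ is not right exact, $L^I_i(D(E))=0$ for $i>0$ is not by itself enough. You also need the natural map $L^I_0(D(E))\to\Lambda_I(D(E))$ to be an isomorphism, so that $H_i(\Lambda_I(D(E^\bullet)))$ computes $L_i(L^I_0)=L^I_i$.

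Both facts come from the Milnor sequence for the tower $\{P_\bullet/I^tP_\bullet\}_t$ attached to a projective resolution $P_\bullet\to N$; this tower has surjective transition maps. The sequence is $0\to{\varprojlim_t}^1\Tor^R_{i+1}(R/I^t,N)\to L^I_i(N)\to H^I_i(N)\to0$, valid for every module $N$ and every $i\geq0$.

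Take $N=D(E)$. Then $\Tor^R_j(R/I^t,D(E))\cong D(\Ext^j_R(R/I^t,E))=0$ for all $j>0$. This gives $L^I_i(D(E))\cong H^I_i(D(E))$ for all $i$, which is $0$ for $i>0$ and $\Lambda_I(D(E))$ for $i=0$. So your fallback to the Čech/Koszul description of Greenlees--May is unnecessary.
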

\begin{proof} From  [Mac] 2.6 and [Ann] 4.1, \[\begin{array}{rcl}
		H^I_i(D(M))&\cong&\displaystyle\varprojlim_{t}\Tor^R_i(\frac{R}{I^t};D(M))\\&\cong&\displaystyle\varprojlim_{t}D(\Ext^i_R(\frac{R}{I^t};M))\\&\cong&D(\displaystyle\varinjlim_{t}\Ext^i_R(\frac{R}{I^t};M))\\&=&D(H^i_I(M)).
	\end{array}\]The result is completed by [Ann] 5.6.
\end{proof}
By Proposition 3.3 and [C-N2] 6.4, we have:
\begin{cor}
	Let $(R,\,\mathfrak{m}_1,\,...,\,\mathfrak{m}_n)$ be a complete semilocal noetherian ring.
\end{cor}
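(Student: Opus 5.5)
The statement breaks off after its hypothesis, so I am inferring what it claims. It follows Proposition 3.3, and the analogous local result [C-N2] 6.4 has this shape, so I take the content to be the two-sided duality between local homology and local cohomology over the complete semilocal ring $R$. Concretely, I take the claims to be
$$H^I_i(D(M))\cong D(H^i_I(M)) \quad\text{and}\quad H^i_I(M)\cong D(H^I_i(D(M)))$$
for a finitely generated $R$-module $M$. For an artinian module $A$, the companion formula would be $H^I_i(A)\cong D(H^i_I(D(A)))$, and also $H^I_i(A)\cong L^I_i(A)$. If the intended conclusion is a subset of these, the plan below specializes directly.

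The first isomorphism is just Proposition 3.3 applied to $M$: it gives $L^I_i(D(M))\cong D(H^i_I(M))\cong H^I_i(D(M))$ for every $i\ge 0$, with no hypothesis on $M$. For the second I would apply $D$ to this isomorphism and use Matlis reflexivity over $R$. Here the semilocal input is Ooishi's duality [Ooi] 1.6 with $R^*=E(R/\mathtt{J})$, which uses that $R$ is $\mathtt{J}$-adically complete. So the step is to show that $H^i_I(M)$ is Matlis reflexive, i.e. $DD(H^i_I(M))\cong H^i_I(M)$.

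Reflexivity is where I expect the main obstacle, since $H^i_I(M)$ is in general neither artinian nor finitely generated, and [Ooi] 1.6 (5) gives reflexivity only for those classes. I would first try to avoid reflexivity of $H^i_I(M)$ altogether. The idea is to work with $D(M)$, which is artinian by [Ooi] 1.6 (2) and hence semi-discrete linearly compact. Then I would invoke the semilocal version of [C-N2] 6.4, i.e. $H^I_i(D(M))\cong L^I_i(D(M))$ for artinian modules, which Proposition 3.3 already supports. Finally I would dualize using Proposition 2.6, which says Macdonald dual commutes with inverse and direct limits. The chain is
$$D(H^I_i(D(M)))=D\big(\varprojlim_t \Tor_i(R/I^t,D(M))\big)\cong \varinjlim_t D\,\Tor_i(R/I^t,D(M))\cong \varinjlim_t \Ext^i(R/I^t,DD(M)).$$
The last isomorphism is [Mac] 2.6 / [Ann] 4.1. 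Since $DD(M)\cong M$ for finitely generated $M$ by [Ooi] 1.6 (5), the right-hand side is $H^i_I(M)$.

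The delicate points in this chain are two, and I would check both carefully. First, each $\Tor_i(R/I^t,D(M))$ is artinian, so its Matlis dual coincides with its Macdonald dual; this is the remark after Proposition 2.5 that the two duals agree on semi-discrete modules. Second, the inverse limit must carry the linear topology of [C-N1] 2.6, so that Proposition 2.6 applies to it. The artinian version for a module $A$ is identical with $M$ replaced by $D(A)$, using $DD(A)\cong A$. The isomorphism $H^I_i(A)\cong L^I_i(A)$ then comes from Proposition 3.3 applied to $D(A)$ together with reflexivity.
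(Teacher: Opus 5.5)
\textbf{This proves only a special case of a misidentified statement.} The content of this corollary is the enumerated list printed right after the environment, and it differs from your guess. For every linearly compact $M$ over the complete semilocal ring $R$ it claims $H^i_I(M)\cong (H^I_i(M^*))^*$ and $H^I_i(M)\cong (H^i_I(M^*))^*$, where $M^*$ is the Macdonald (continuous) dual. For semi-discrete linearly compact $M$ it claims these are \emph{topological} isomorphisms. Your plan handles only finitely generated $M$ (and artinian $A$), algebraically and through Matlis duals. Such modules are semi-discrete linearly compact with $D(M)=M^*$, so this is only a special case of (i) and does not prove the statement. Even in that case, Proposition 2.6 is about $(-)^*$, so your chain yields $H^i_I(M)\cong (H^I_i(D(M)))^*$, which is the paper's form, not the version with the Matlis dual $D$ outside. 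The two coincide exactly when every homomorphism $H^I_i(D(M))\to R^*$ is continuous (e.g.\ if $H^I_i(D(M))$ is semi-discrete linearly compact), which you have not checked.

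\textbf{Your two key tools fail for general linearly compact modules.} Both pillars of your argument, that $D(M)$ is artinian and that $DD(M)\cong M$ by [Ooi] 1.6, are special to finitely generated modules. Take $M=\prod_{\mathbb N}R/\mathtt{J}$ with the product topology. It is linearly compact and $\mathtt{J}$-separated but not semi-discrete, since $\bigoplus_{\mathbb N}R/\mathtt{J}$ is a dense proper submodule. By Proposition 2.6, $M^*\cong\bigoplus_{\mathbb N}R/\mathtt{J}$, which is neither artinian nor finitely generated. Meanwhile $D(M)=\mathrm{Hom}_R(M,R/\mathtt{J})$ is much larger than $M^*$, and $M\to DD(M)$ is not surjective.

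\textbf{The missing idea is Macdonald reflexivity.} You should replace Matlis reflexivity by $\omega\colon M\xrightarrow{\ \sim\ }M^{**}$ for linearly compact modules over the complete semilocal ring (Section 2, Proposition 2.5; topologically [C-N1] 6.3 as used in Remark 4.3(ii)). With this, the paper gets the corollary in one line from Proposition 3.3 and the duality cited as [C-N2] 6.4. For instance, when $M^*$ is discrete (as in the proof of Proposition 2.5), $D(M^*)=M^{**}\cong M$, and $D=(-)^*$ on the discrete module $H^i_I(M^*)$. So Proposition 3.3 applied to $M^*$ gives $H^I_i(M)\cong (H^i_I(M^*))^*$ with no finiteness hypothesis on $M$.

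\textbf{Part (ii) is not addressed.} Your plan never touches topologies, which is the entire content of (ii). There one needs that on semi-discrete linearly compact modules $D=(-)^*$ and that $\omega$ is a topological isomorphism (Remark 4.3(ii)).
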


\begin{enumerate}
	\item [$(i)$] If $M$ is linearly compact $R$-module, then for all $i\geq0$,$$H^{i}_I(M)\cong(H^{I}_{i}(M^*))^*,$$
	$$H^{I}_i(M)\cong(H^{i}_{I}(M^*))^*.$$
	
	\item [$(ii)$] If $M$ is a semi-discrete linearly compact $R$-module, then we have topological isomorphism of $R$-modules for all $i\geq0$,
	$$H^{i}_I(M)\cong(H^{I}_{i}(M^*))^*,$$
	$$H^{I}_i(M)\cong(H^{i}_{I}(M^*))^*.$$
	
\end{enumerate}

\section{Dimension, Vanishing and non-vanishing}


Remember that the (\textit{Krull}) dimension $dim_RM$ of a non-zero $R$-module $M$ is the supremum of lengths of chains of primes in the support of $M$ if this supremum exists, and $\infty$ otherwise. If $M$ is finitely generated, then $dim M=$max$\{dim R/\mathfrak{p}\,\arrowvert\,\mathfrak{p}\in Ass M\}$. For convenience, we set $dim M=-1$ if $M=0$.
Note that if $A$ is a artinian module, then $dim (A)=0$, since support of $A$ consists of maximals only. 

We now recall the concept of \textit{noetherian dimension} of an $R$-module $M$ denoted by Ndim $M$.
Note that the notion of noetherian dimension was introduced first by R.N. Roberts [Rob] by the
name Krull dimension. Later, D. Kirby [Kir] changed this terminology of Roberts and refereed
to noetherian dimension to avoid confusion with well-known Krull dimension of finitely generated
modules. Let $M$ be an $R$-module. When $M=0$ we put $Ndim\,M=-1$. Then by induction,
for any ordinal $\alpha$, we put $NdimM=\alpha$ when (i) $NdimM <\alpha$ is false, and (ii) for every ascending
chain $M_0\subseteq M_1 \subseteq\cdots$ of submodules of $M$, there exists a positive integer $m_0$ such that
$Ndim(M_{m+1}/M_m) < \alpha$ for all $m\geq m_0$. Thus $M$ is non-zero and finitely generated if and only
if $Ndim\,M=0$. If $0\rightarrow M''\rightarrow M\rightarrow M'\rightarrow0$ is a short exact sequence of R-modules, then
$Ndim\,M=$ max$\{NdimM'',\,NdimM'\}$. 

\begin{remark}\text{(See [C-N1] 4.6).}

		 In case $M$ is an artinian module, $Ndim\,M<\infty$ (see [Kir] 2.6). More general, if $M$ is a semi-discrete linearly compact module, there is a short exact sequence $0\rightarrow N\rightarrow M\rightarrow A\rightarrow0$ where $N$ is finitely generated and $A$ is artinian (See [Z{\"o}s] Theorem). Hence $Ndim\,M=$ max$\{NdimM'',\,NdimM'\}<\infty$.
	
\end{remark}

The proof of the following proposition is similar to the original, however, in item $(ii)$, it is necessary to use [G-S] 4.6 and [G-S] 7.5, so that $dim_RM=dim_{\widehat{R}}M$ when $M$ is finitely generated.

\begin{prop} Let $M$ be a semi-discrete linearly compact $R$-module. Then
	\begin{enumerate}
		\item [$(i)$] $Ndim_RM=Ndim_{\widehat{R}}M$;
		\item [$(ii)$]
		$dim_RM=dim_{\widehat{R}}M$.
		
	\end{enumerate}
	
\end{prop}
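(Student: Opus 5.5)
The plan is to reduce both statements to the two pieces of the Zöschinger decomposition and then treat the finitely generated piece and the artinian piece separately. The first step is to make sense of $M$ as an $\widehat{R}$-module. Since $M$ is linearly compact, it is complete in its linear topology. Each open submodule $U$ satisfies $\mathtt{J}^t(x+U)\subseteq U$ for some $t$ by (1.1.2), so $M/U$ is $\mathtt{J}$-torsion. Writing $M\cong\varprojlim M/U$, each $M/U$ carries a canonical $\widehat{R}$-module structure, compatible with the transition maps. This gives the $\widehat{R}$-structure on $M$, as in [C-N1]. In this structure the $R$-submodules that are closed coincide with the closed $\widehat{R}$-submodules. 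Because $M$ is semi-discrete, every $R$-submodule is closed, so the lattices of $R$-submodules and of $\widehat{R}$-submodules of $M$ are identical.

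For $(i)$ I will use this coincidence of lattices directly. The noetherian dimension is defined purely in terms of the lattice of submodules: by the inductive definition through ascending chains and their subquotients. Every subquotient $M_{m+1}/M_m$ is again semi-discrete linearly compact by Remark 1.4, so its two lattices also agree. An induction on the ordinal $\alpha$ then shows that $Ndim_RN<\alpha$ if and only if $Ndim_{\widehat{R}}N<\alpha$ for every subquotient $N$ of $M$. This gives $Ndim_RM=Ndim_{\widehat{R}}M$.

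For $(ii)$ I will take the short exact sequence $0\rightarrow N\rightarrow M\rightarrow A\rightarrow0$ from [Zös] Theorem, with $N$ finitely generated and $A$ artinian. By the lattice identification this is also a sequence of $\widehat{R}$-modules, and $N$ is a finitely generated $\widehat{R}$-module, since generators over $R$ generate the same submodule over $\widehat{R}$.

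Since $\operatorname{Supp}M=\operatorname{Supp}N\cup\operatorname{Supp}A$ over either ring, we get $dim M=\max\{dim N,\,dim A\}$ over both $R$ and $\widehat{R}$. The artinian part is easy. If $A\neq0$, its support over either ring consists of maximal ideals only, because $A$ is artinian over $\widehat{R}$ as well. So $dim_RA=0=dim_{\widehat{R}}A$. The case $A=0$ is trivial.

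The main obstacle is the finitely generated part, namely showing $dim_RN=dim_{\widehat{R}}N$. Since $N$ is closed in $M$, it is linearly compact, so by Proposition 3.1 it is $\mathtt{J}$-adically complete and $N\cong\widehat{R}\otimes_RN$. Here I will invoke [G-S] 4.6 and [G-S] 7.5, which give that Krull dimension is preserved under $\mathtt{J}$-adic completion of finitely generated modules over a semilocal ring. Concretely, $dim_{\widehat{R}}\widehat{N}=dim_RN$, by the equality of the degrees of the Hilbert–Samuel polynomials of $N$ and $\widehat{N}$ with respect to $\mathtt{J}$ and $\widehat{\mathtt{J}}$, which is independent of the ring since $N/\mathtt{J}^tN=\widehat{N}/\widehat{\mathtt{J}}^t\widehat{N}$.

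The delicate point is that the $\widehat{R}$-structure on $N$ coming from Proposition 3.1 must coincide with the one induced from $M$. This holds because both are obtained by continuity from the $\mathtt{J}$-adic (equivalently, linear) topology. Combining the finitely generated and artinian parts gives $dim_RM=dim_{\widehat{R}}M$.
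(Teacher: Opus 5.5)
Your proposal is correct and follows essentially the paper's route, which adapts the local argument of [C-N1]. Part $(i)$ rests on the fact that a semi-discrete linearly compact module has the same lattice of submodules over $R$ and over $\widehat{R}$, and part $(ii)$ on the Z\"oschinger sequence $0\rightarrow N\rightarrow M\rightarrow A\rightarrow0$, with [G-S] 4.6 and 7.5 giving $dim_RN=dim_{\widehat{R}}N$ for the finitely generated piece. Your ``delicate point'' can even be sidestepped: $N$ is a finitely generated $\widehat{R}$-module with $\widehat{\mathtt{J}}^tN=\mathtt{J}^tN$ and the same submodule lattice over both rings, so its Hilbert--Samuel functions over $R$ and $\widehat{R}$ agree and no identification with $\widehat{R}\otimes_RN$ is needed.
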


\begin{remark}\text{ (See [C-N1] $7.3$)}
	\begin{enumerate}
		\item [(i)] Denote by $\mathcal{C}$ the category of semi-discrete linearly compact $R$-modules. It is well known
		that the category $\mathcal{C}$ contains the category of artinian $R$-modules and also the category of finitely
		generated $R$-modules if $R$ is complete. However, there are many semi-discrete linearly
		compact $R$-modules which are neither artinian nor finitely generated. An example for
		this conclusion is the module $K$ in [C-N1] 4.9. More general, let $R$ be complete ring, $A$ an
		artinian $R$-module with $Ndim\,A>0$ and $N$ a finitely generated $R$-module with $dim\,N >0$.
		Then $M=A\oplus N$ is semi-discrete linearly compact. Further, let $Q$ be a quotient module of
		$M$, then $Q$ is also a semi-discrete linearly compact R-module.
		\item [(ii)] If $M\in\mathcal{C}$, then by [Mac] 5.8 the Matlis dual $D(M)$ and the Macdonald dual $M^*$ are the same. Moreover,
		$M$ is linearly discrete by [C-N1] 6.8 and can be regarded by [C-N1] 7.1 as an $\widehat{R}$-module, therefore the
		Macdonald dual functor $(-)^*$ is a functor from $\mathcal{C}$ to itself and we have by [C-N1] 6.3 a topological
		isomorphism $\omega:M\rightarrow M^{**}$. Thus, $(-)^*$ is an equivalent functor on the category $\mathcal{C}$.
	\end{enumerate}
\end{remark}

\begin{prop} Let $M$ be a semi-discrete linearly compact R-module. Then
	\[Ndim_RM^*=dim_RM\;\; \text{and}\;\; Ndim_RM=dim_RM^*.\]
	
\end{prop}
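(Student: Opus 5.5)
We reduce to the complete case, decompose $M$ into a finitely generated part and an artinian part, and then use Matlis duality to swap the roles of $Ndim$ and $dim$. First, by Proposition 4.2 both $Ndim$ and $dim$ are unchanged when $M$ is viewed as an $\widehat{R}$-module. By Remark 4.3 (ii), $M$ can be regarded as an $\widehat{R}$-module, and $M^*$ coincides with the Matlis dual $D(M)$, which is again semi-discrete linearly compact. So we may assume $R$ is complete. Then $(-)^*$ is an equivalence on $\mathcal{C}$ with $M\cong M^{**}$. Hence the second equality follows from the first applied to $M^*$:
\[Ndim_R M = Ndim_R M^{**} = dim_R M^*.\]
It therefore suffices to prove $Ndim_R M^* = dim_R M$.

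By [Z{\"o}s] Theorem (as in Remark 4.1) there is a short exact sequence $0\rightarrow N\rightarrow M\rightarrow A\rightarrow 0$ with $N$ finitely generated and $A$ artinian. Since $R^*$ is injective, dualizing gives an exact sequence
\[0\rightarrow A^*\rightarrow M^*\rightarrow N^*\rightarrow 0,\]
where $A^*$ is noetherian and $N^*$ is artinian ([Ooi] 1.6). Noetherian dimension is additive in the sense $Ndim M^* = \max\{Ndim A^*, Ndim N^*\}$. The support of $M$ is the union of the supports of $N$ and $A$, so $dim M = \max\{dim N, dim A\}$. The case $M=0$ is trivial.

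We now compare the two pieces separately.
\begin{enumerate}
\item[(a)] $A^*$ is nonzero noetherian, so $Ndim A^* = 0$. Also $dim A = 0$, because $\mathrm{Supp}\,A$ consists of maximal ideals. Both equal $-1$ when $A=0$. Thus $Ndim A^* = dim A$.
\item[(b)] $N^*$ is artinian, and we need $Ndim N^* = dim N$ for $N$ finitely generated over the complete semilocal ring. This is the semilocal analogue of the known result for local rings ([C-N1] 4.8, going back to Roberts and Kirby [Kir]: $Ndim D(N)=dim N$).
\end{enumerate}
For (b), the plan is to localize the problem at each maximal ideal. Write $N\cong\bigoplus_j N_{\mathfrak{m}_j}$ (completeness of $R$ splits $R\cong\prod_j R_{\mathfrak{m}_j}$, each factor complete local). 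Then $R^*\cong\bigoplus_j E(R/\mathfrak{m}_j)$, and the dual decomposes as $N^*\cong\bigoplus_j D_j(N_{\mathfrak{m}_j})$. Here $D_j$ is the ordinary Matlis dual over $R_{\mathfrak{m}_j}$, and submodule lattices over $R$ and over $R_{\mathfrak{m}_j}$ agree on each summand.

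With this decomposition, both sides become maxima over $j$ of the corresponding local quantities. Indeed $Ndim$ of a finite direct sum is the maximum over the summands, and $dim N = \max_j dim N_{\mathfrak{m}_j}$. Hence (b) reduces to the local result.

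The main obstacle is this reduction step. One must justify the product decomposition of the complete semilocal ring (the Chinese remainder theorem applied to $R/\mathtt{J}^r$ and passage to the limit, as in Section 2). One must also check that $Ndim$ over $R$ agrees with $Ndim$ over each local factor on its summand.

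Combining (a) and (b) gives $Ndim M^* = \max\{Ndim A^*, Ndim N^*\} = \max\{dim A, dim N\} = dim M$.
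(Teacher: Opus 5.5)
Your proof is correct and follows essentially the paper's route: reduce to complete $R$, dualize the Zöschinger sequence $0\rightarrow N\rightarrow M\rightarrow A\rightarrow 0$, treat the artinian piece trivially, and settle the finitely generated piece by splitting into local components and applying the local result (the paper takes that from [C-N1] 7.4, not [C-N1] 4.8). The differences are cosmetic: the paper splits the artinian dual via Sharp's decomposition [Sha] 1.4, 1.7 rather than splitting the complete ring as $\prod_j R_{\mathfrak{m}_j}$, which gives the same decomposition when $R$ is complete, and you make explicit the biduality step giving $Ndim_R M = dim_R M^*$, which the paper leaves implicit.
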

\begin{proof} From [C-N1] 7.1 (ii) and Proposition 4.2 we may assume that \newline $(R,\,\mathfrak{m}_1,\,...,\,\mathfrak{m}_n)$ is a complete ring. If $M$ is finitely generated $R$-module, $M^*$ is artinian. Keep in mind in our case that $D(M)=M^*$. Let's show that  \newline$Ndim\,M^*=dim\,M$ when $M$ is finitely generated. Since $M^*$ is artinian, we have that $M^*=\bigoplus_{i=1}^rA_i$ where $A_i$ is a $R_{\mathfrak{m}_i}$-module for each $i$, by [Sha] 1.4 and 1.7. From [C-N1] 7.4, $Ndim\,A_i=dim\,A_i^*$. Thus,
	\begin{eqnarray*}
		Ndim\,M^* & = & max\{Ndim\,A_i;\,i=1,\dots\,,r\}\\
		& = & max\{dim\,A_i^*;\,i=1,\dots\,,r\}\\
		& = & dim\,M^{**}\\
		& = & dim\,\widehat{M}\\
		& = & dim\,M
	\end{eqnarray*}	where the last two equalities come from [Ooi] 1.6 (iv) and [C-N1] 3.8. If $M$ is artinian, then $Ndim\,M^*=dim\,M=0$.
	
	Suppose now that $M$ is semi-discrete linearly compact. There is by [Z{\"o}s] (Theorem) a short exact sequence $0\rightarrow N\rightarrow M\rightarrow A\rightarrow 0$ in which $N$ is finitely
	generated and $A$ is artinian. Thus we get by Macdonald duality an exact sequence $0\rightarrow A^*\rightarrow M^*\rightarrow N^*\rightarrow 0$, where $N^*$ is artinian and $A^*$ is finitely generated. Then\begin{eqnarray*}
		Ndim\,M^* & = & max\{Ndim\,N^*,\,Ndim\,A^*\}\\
		& = & max\{dim\,N,\,dim\,A\}\\
		& = & dim\,M.
	\end{eqnarray*}
	The second equality follows from Remark 4.3 (ii).
\end{proof}	

The following lemma was inspired by [LRN] 3.16.

\begin{lemma} Let $M$ be
	a non-zero artinian $R$-module with $Ndim\,M=d$ and $\mathtt{J}$ the Jacobson radical of $R$. Then $H^{\mathtt{J}}_d(M)\neq0$.
\end{lemma}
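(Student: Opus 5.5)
Since $M$ is artinian, $M$ is $\mathtt{J}$-torsion, so by Remark 1.5 it carries a natural $\widehat{R}$-module structure with the same submodules. By Proposition 4.2(i) we have $Ndim_RM=Ndim_{\widehat{R}}M$. Local homology is compatible with this passage: $\Tor^R_i(R/\mathtt{J}^t,M)\cong\Tor^{\widehat{R}}_i(\widehat{R}/\widehat{\mathtt{J}}^t,M)$, because $\widehat{R}$ is flat over $R$ and $M\cong M\otimes_R\widehat{R}$. So we may assume that $R$ is complete with respect to the $\mathtt{J}$-adic topology. Then $M^*=D(M)$ is a finitely generated $R$-module, since it is noetherian by [Ooi] 1.6. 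By Proposition 4.4, $\dim_R M^*=Ndim_RM=d$.

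Next we apply the duality of Proposition 3.3 and Corollary 3.4 with $I=\mathtt{J}$. Because $M$ is artinian, $\omega$ gives $M\cong M^{**}$ (Remark 4.3(ii)). Corollary 3.4 applied to the finitely generated module $M^*$, which is semi-discrete linearly compact since $R$ is complete, gives
\[
H^{\mathtt{J}}_d(M)\cong H^{\mathtt{J}}_d(M^{**})\cong\bigl(H^d_{\mathtt{J}}(M^*)\bigr)^*.
\]
Alternatively, we can use Proposition 3.3 directly in the form $H^{\mathtt{J}}_d(D(M^*))\cong D(H^d_{\mathtt{J}}(M^*))$. Since $R^*$ is an injective cogenerator (Proposition 1.8 / [Ooi]), $D(X)=0$ exactly when $X=0$. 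The problem therefore reduces to proving $H^d_{\mathtt{J}}(M^*)\neq0$.

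The final step is Grothendieck's non-vanishing theorem in the semilocal setting, which is the main obstacle: for a finitely generated module $N$ over a noetherian semilocal ring with $\dim N=d$, we need $H^d_{\mathtt{J}}(N)\neq0$. The local version ([B-S] 6.1.4) does not apply verbatim, so we reduce to it by localization. Choose a maximal ideal $\mathfrak{m}_j$ with $\dim N_{\mathfrak{m}_j}=d$; this is possible since $\dim N=\max_j\dim N_{\mathfrak{m}_j}$. By flat base change ([B-S] 4.3.2),
\[
H^d_{\mathtt{J}}(N)_{\mathfrak{m}_j}\cong H^d_{\mathtt{J}R_{\mathfrak{m}_j}}(N_{\mathfrak{m}_j}).
\]
Since $\mathtt{J}R_{\mathfrak{m}_j}=\mathfrak{m}_jR_{\mathfrak{m}_j}$, the right-hand side is the top local cohomology of $N_{\mathfrak{m}_j}$ over the local ring $R_{\mathfrak{m}_j}$ at its maximal ideal. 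It is nonzero by Grothendieck's theorem. Hence $H^d_{\mathtt{J}}(M^*)\neq0$, and so $H^{\mathtt{J}}_d(M)\neq0$.

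Two points need care in the write-up: checking that the isomorphisms of Corollary 3.4 are valid at the level used here (only the algebraic isomorphism is needed), and checking that completion preserves both $d$ and the vanishing behaviour. Both follow from Proposition 4.2 and the flatness of $R\to\widehat{R}$.
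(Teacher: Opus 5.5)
Your proposal is correct and follows essentially the same route as the paper. You reduce to $R$ complete, pass to the non-zero finitely generated dual $D(M)$ with $\dim D(M)=d$ (Proposition 4.4), and get $H^d_{\mathtt{J}}(D(M))\neq0$ by localizing at a maximal ideal and invoking Grothendieck's non-vanishing theorem. You then conclude from $H^{\mathtt{J}}_d(M)\cong H^{\mathtt{J}}_d(D(D(M)))\cong D(H^d_{\mathtt{J}}(D(M)))$ via Proposition 3.3 and Matlis reflexivity. Your explicit choice of $\mathfrak{m}_j$ with $\dim N_{\mathfrak{m}_j}=d$ and your Tor base-change argument for the reduction to the complete case make precise two steps that the paper only sketches.
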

\begin{proof} From [C-N1] 7.1 (ii), Proposition 4.2 and Independence Theorem ([B-S] 4.2.1) we may assume that $R$ is a complete ring. By [Ooi] 1.6 the Matlis dual  $D(M)$ is a non-zero finitely generated and $dim\,D(M)=d$ by Proposition 4.4. Then $H^d_{\mathtt{J}}(D(M))\neq0$, otherwise, $H^d_{\mathfrak{m}R_{\mathfrak{m}}}(D(M)_{\mathfrak{m}})=H^d_{\mathtt{J}R_{\mathfrak{m}}}(D(M)_{\mathfrak{m}})=0$ for any maximal $\mathfrak{m}$ of $R$ and this contradicts Grothendieck's Non-Vanishing Theorem. Therefore, by Proposition 3.5 and [Ooi] 1.6 \[H_d^{\mathtt{J}}(M)\cong H_d^{\mathtt{J}}D(D((M)))\cong D(H^d_{\mathtt{J}}(D(M)))\neq0\]and this prove the result.\end{proof}

In [C-N1] 4.10 it was shown that $Ndim\,M=\,\text{max}\,\{i\,|\,H^{\mathfrak{m}}_i(M)\neq0\}$ if $Ndim\,M\neq1 $ when $(R,\,\mathfrak{m})$ is a local noetherian ring and $M$ is a non-zero semi-discrete linearly compact module. We generalize this result for semilocal case. Item $(i)$ required changes to the proof.
In [C-N1] 4.9 there is counterexample when $Ndim\,M=1$.

\begin{prop} Let $R$ be a semilocal noetherian ring,  $\mathtt{J}$ the Jacobson radical of $R$ and $M$ a non-zero semi-discrete linearly compact $R$-module. Then
	\begin{enumerate}
		\item [(i)] $Ndim\,\Gamma_{\mathtt{J}}(M)=$ max$\{i\,|\,H^{\mathtt{J}}_i(M)\neq0\}$ if $\Gamma_{\mathtt{J}}(M)\neq0$;
		\item [(ii)] $Ndim\,M=$ max$\{i\,|\,H^{\mathtt{J}}_i(M)\neq0\}$ if $Ndim\,M\neq1$.
	\end{enumerate}
\end{prop}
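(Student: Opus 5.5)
For (i), the plan is to replace $M$ by its $\mathtt{J}$-torsion part. By Proposition 3.3, $H_i^{\mathtt{J}}(M)\cong H_i^{\mathtt{J}}(\Gamma_{\mathtt{J}}(M))$ for all $i>0$. So for the positive indices the maximum in (i) can be computed on $A:=\Gamma_{\mathtt{J}}(M)$.

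Since $M$ is semi-discrete linearly compact, $A$ is a submodule, hence closed. By Remark 1.4 it is therefore linearly compact, and it is still semi-discrete and $\mathtt{J}$-torsion. By Theorem 1.7, $A$ is artinian. Write $d=Ndim\,A$; this is finite by Remark 4.1, and $d\ge 0$ because $A\neq 0$.

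Lemma 4.5 gives $H_d^{\mathtt{J}}(A)\neq 0$. For vanishing above $d$, pass to the complete case using [C-N1] 7.1(ii) and Proposition 4.2, exactly as in Lemma 4.5. Then Proposition 3.5 and [Ooi] 1.6 give
\[H_i^{\mathtt{J}}(A)\cong D(H^i_{\mathtt{J}}(D(A))),\]
where $D(A)$ is finitely generated of dimension $d$ by Proposition 4.4. Grothendieck's vanishing theorem ([B-S] 6.1.2) gives $H^i_{\mathtt{J}}(D(A))=0$ for $i>d$, so $H_i^{\mathtt{J}}(A)=0$ for $i>d$.

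The case $d=0$ needs separate care, because Proposition 3.3 only covers $i>0$. Here $A$ has finite length and $H_i^{\mathtt{J}}(M)=0$ for all $i>0$, so the maximum is $0$ provided $H_0^{\mathtt{J}}(M)=\Lambda_{\mathtt{J}}(M)\neq0$. For this I expect to use that $A$ is artinian and $\mathtt{J}$-torsion, so that $H_0^{\mathtt{J}}(A)=\Lambda_{\mathtt{J}}(A)\neq0$, together with the natural map from $H_0^{\mathtt{J}}(A)$ to $H_0^{\mathtt{J}}(M)$. Alternatively I would note that $M\neq0$ semi-discrete linearly compact gives $\Lambda_{\mathtt{J}}(M)\neq 0$ via [C-N1] 3.8. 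This completes (i).

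For (ii), use the sequence $0\to N\to M\to A'\to 0$ from [Z{\"o}s], with $N$ finitely generated and $A'$ artinian, as in Remark 4.1. By [C-N1] 4.5–4.8 (whose semilocal versions follow the same way), one has $H_i^{\mathtt{J}}(M)=0$ for $i>Ndim\,M$. The task is then non-vanishing at $d=Ndim\,M$, following [C-N1] 4.10 with Proposition 3.3 and part (i) replacing the local statements.

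If $d=0$, then $M$ is finitely generated and $H_0^{\mathtt{J}}(M)=\Lambda_{\mathtt{J}}(M)\neq0$ by Nakayama, since $\mathtt{J}M\neq M$.

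If $d\ge 2$, the key claim is $Ndim\,\Gamma_{\mathtt{J}}(M)=d$. Finitely generated modules have $Ndim\le 0$, and $M/\Gamma_{\mathtt{J}}(M)$ embeds suitably via $L(M)=L(\Gamma_{\mathtt{J}}(M))$ (Proposition 3.3). So the noetherian dimension $d\ge 2$ must come from the artinian part contained in $\Gamma_{\mathtt{J}}(M)$. Then (i) gives $\max\{i\mid H_i^{\mathtt{J}}(M)\neq0\}=d$.

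I expect the main obstacle to be this comparison of $Ndim\,M$ with $Ndim\,\Gamma_{\mathtt{J}}(M)$. The plan is to show that $M/L(M)$ has $Ndim\le 1$, or is finitely generated modulo a part of $Ndim\le1$, by adapting [C-N1] 4.4 with the decomposition of artinian modules over the $\mathfrak{m}_j$ from [Sha] 1.4. This is exactly where the exclusion $Ndim\,M\neq 1$ enters, as the counterexample [C-N1] 4.9 shows.
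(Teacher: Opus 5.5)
\textbf{Part (i).} Your route is the paper's route. $\Gamma_{\mathtt J}(M)$ is artinian by Theorem 1.7, Lemma 4.5 gives $H^{\mathtt J}_d(\Gamma_{\mathtt J}(M))\neq 0$ for $d=Ndim\,\Gamma_{\mathtt J}(M)$, and the isomorphism $H_i^{\mathtt J}(M)\cong H_i^{\mathtt J}(\Gamma_{\mathtt J}(M))$ for $i>0$ (the paper's Proposition 3.2, which you call 3.3) transfers this to $M$. Proving vanishing for $i>d$ by Matlis duality plus Grothendieck vanishing, instead of citing [C-N1] 4.8, is a legitimate variant.

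You rightly notice that this transfer says nothing in degree $0$. So when $d=0$ one must still show $\Lambda_{\mathtt J}(M)\neq 0$; the paper's appeal to Proposition 3.2 leaves this implicit too. Neither of your fixes is adequate as written.
\begin{itemize}
\item Your alternative is false. Over $R=k[[x]]$ the injective hull $E=E(k)$ is non-zero, artinian (hence semi-discrete linearly compact) and satisfies $xE=E$, so $\Lambda_{\mathtt J}(E)=0$. The same example shows that ``artinian and $\mathtt J$-torsion'' does not give $\Lambda_{\mathtt J}(A)\neq 0$; it is finite length that gives $\mathtt J^sA=0$ and $\Lambda_{\mathtt J}(A)=A$.
\item Your first route needs $\Lambda_{\mathtt J}(A)\to\Lambda_{\mathtt J}(M)$ to be injective, and you do not justify this.
\end{itemize}
Injectivity does hold. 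The long exact local homology sequence of $0\to A\to M\to M/A\to 0$ (available for linearly compact modules, since $\varprojlim$ is exact on them) contains $H_1^{\mathtt J}(M/A)\to H_0^{\mathtt J}(A)\to H_0^{\mathtt J}(M)$. Now $M/A$ is semi-discrete linearly compact with $\Gamma_{\mathtt J}(M/A)=0$, so Proposition 3.2 gives $H_1^{\mathtt J}(M/A)\cong H_1^{\mathtt J}(0)=0$. Hence $A\hookrightarrow \Lambda_{\mathtt J}(M)$.

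\textbf{Part (ii).} The paper writes no proof; it defers to [C-N1] 4.10. Your sketch has a genuine gap exactly at the step you flag. For $Ndim\,M\ge 2$ everything rests on $Ndim\,(M/\Gamma_{\mathtt J}(M))\le 1$, and your reasons do not prove it:
\begin{itemize}
\item the artinian module in Z\"oschinger's sequence is a quotient of $M$, not a submodule of $\Gamma_{\mathtt J}(M)$;
\item $L(M)=L(\Gamma_{\mathtt J}(M))$ gives no control over $M/\Gamma_{\mathtt J}(M)$, which need not be finitely generated.
\end{itemize}
Here is one way to close it.
\begin{itemize}
\item Put $M'=M/\Gamma_{\mathtt J}(M)$, so $\Gamma_{\mathtt J}(M')=0$. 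Take $0\to F\to M'\to B\to 0$ with $F$ finitely generated and $B$ artinian.
\item Since $B=\Gamma_{\mathtt J}(B)$, the $\Gamma_{\mathtt J}$ long exact sequence gives an exact sequence $0=\Gamma_{\mathtt J}(M')\to B\to H^1_{\mathtt J}(F)$, so $B$ embeds in $H^1_{\mathtt J}(F)$.
\item $Ndim\,H^1_{\mathtt J}(F)\le 1$: split this artinian module into its $\mathfrak m_j$-components by [Sha] 1.4, then use the local fact $Ndim\,H^i_{\mathfrak m}(F)\le i$ (e.g.\ via Proposition 4.4 and local duality over the completion).
\item Thus $Ndim\,M'\le 1$. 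Hence $Ndim\,\Gamma_{\mathtt J}(M)=Ndim\,M$ whenever $Ndim\,M\ge 2$, and (i) applies.
\end{itemize}
Your $d=0$ case is fine. So is the vanishing for $i>Ndim\,M$, which follows from $H_i^{\mathtt J}(M)\cong H_i^{\mathtt J}(\Gamma_{\mathtt J}(M))$ and $Ndim\,\Gamma_{\mathtt J}(M)\le Ndim\,M$.
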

\begin{proof} $(i)$ By Theorem 1.7 $\Gamma_{\mathtt{J}}(M)$ is artinian $R$-module. From [C-N1] 4.8 and Lemma 4.5 we have that\[Ndim\,\Gamma_{\mathtt{J}}(M)=\, \text{max}\{i\,|\,H^{\mathtt{J}}_i(\Gamma_{\mathtt{J}}(M))\neq0\}.\] The result follows by Proposition 3.2.
\end{proof}

In [C-N1] 4.12 there is a example which shows that the condition $\Gamma_{\mathtt{J}}(M)\neq0$ in Proposition 4.6 (i) is needful.
The proof of the following corollary is similar to the original, however, it is necessary to use Lemma 4.5.

\begin{cor} Let $R$ be a semilocal noetherian ring, $\mathtt{J}$ the Jacobson radical of $R$ and $M$ a non-zero semi-discrete linearly
	compact module. Then $H^{\mathtt{J}}_i(M)=0$ for all $i\geq0$ if and only if there exists an element $x\in\mathtt{J}$ such
	that $xM=M$ and $0:_Mx=0$.
\end{cor}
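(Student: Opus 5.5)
The plan is to follow the structure of the local proof in [C-N1] 4.11, inserting Proposition 4.6(i), Lemma 4.5 and Theorem 1.7 wherever the local case uses $\mathfrak{m}$-adic facts.

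For the direction ($\Leftarrow$), suppose $x\in\mathtt{J}$ satisfies $xM=M$ and $0:_Mx=0$. Then multiplication by $x$ is an automorphism of $M$, and it is continuous. By functoriality it induces automorphisms of every $\Tor^R_i(R/\mathtt{J}^t,M)$, and hence of every $H^{\mathtt{J}}_i(M)$. On the other hand, $x^t$ annihilates $\Tor^R_i(R/\mathtt{J}^t,M)$. So for a fixed $t$, the transition maps of the inverse system satisfy a relation in which multiplication by $x^t$ factors through $0$. Multiplication by $x$ on the limit is an isomorphism, so for an element $(a_t)_t$ of the limit with $a_t=x^t b_t$ we get $a_t=0$. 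More precisely, each element is $x^t$ times an element of the limit, and its $t$-th component is therefore $0$ for all $t$. Hence $H^{\mathtt{J}}_i(M)=0$ for all $i$.

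For the direction ($\Rightarrow$), suppose $H^{\mathtt{J}}_i(M)=0$ for all $i\ge0$. The first step is to show $\Gamma_{\mathtt{J}}(M)=0$. If not, Proposition 4.6(i) gives $Ndim\,\Gamma_{\mathtt{J}}(M)=\max\{i\mid H^{\mathtt{J}}_i(M)\neq0\}$. Since the right side is the maximum of an empty set, namely $-1$, we get $\Gamma_{\mathtt{J}}(M)=0$, a contradiction. (Equivalently: $\Gamma_{\mathtt{J}}(M)$ is artinian by Theorem 1.7, Proposition 3.2 gives $H^{\mathtt{J}}_d(\Gamma_{\mathtt{J}}(M))\cong H^{\mathtt{J}}_d(M)=0$ for $d>0$, and Lemma 4.5 gives nonvanishing; the case $d=0$ is handled via $H_0$.) So $\Gamma_{\mathtt{J}}(M)=0$, which means $\mathtt{J}\not\subseteq\mathfrak{p}$ for the associated primes of the finitely generated part. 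Next, $H^{\mathtt{J}}_0(M)=\Lambda_{\mathtt{J}}(M)=0$. Since $M$ is linearly compact, [C-N1] 3.8 gives $\Lambda_{\mathtt{J}}(M)\cong M/\bigcap_t\mathtt{J}^tM$, so $\mathtt{J}^tM=M$ for all $t$, and in particular $\mathtt{J}M=M$.

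The remaining step is to find a single $x\in\mathtt{J}$ with $xM=M$ and $0:_Mx=0$. Use the Zöschinger sequence $0\to N\to M\to A\to0$ with $N$ finitely generated and $A$ artinian. The condition $0:_Mx=0$ holds for $x$ avoiding the finitely many associated primes of $N$; none of these contains $\mathtt{J}$ because $\Gamma_{\mathtt{J}}(N)\subseteq\Gamma_{\mathtt{J}}(M)=0$. The condition $xM=M$ should be dualized. Pass to $\widehat R$ by Proposition 4.2 and [C-N1] 7.1, so that Macdonald duality applies (Remark 4.3). Then $xM=M$ becomes $0:_{M^*}x=0$, and $\mathtt{J}M=M$ becomes $0:_{M^*}\mathtt{J}=0$, i.e. $\Gamma_{\mathtt{J}}(M^*)=0$. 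So we need $x$ outside the finitely many "associated primes" coming from $A^*$ (finitely generated) and those coming from $N$, all of which fail to contain $\mathtt{J}$. Prime avoidance then gives $x\in\mathtt{J}$ outside all of them.

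I expect the main obstacle to be the last step: making precise that $0:_{M^*}x=0$ follows from $x$ avoiding the associated primes of $A^*$ and of $N$ (via $0\to A^*\to M^*\to N^*\to0$), and controlling the artinian part $N^*$. Here $\Gamma_{\mathtt{J}}(N^*)=N^*$, so one must instead use the full vanishing of $H^{\mathtt{J}}_i(M)$ for higher $i$, through Proposition 3.5 duality with $H^i_{\mathtt{J}}(M^*)$, to get a $\mathtt{J}$-regular-type element. I would first try to mimic [C-N1] 4.11 verbatim, reducing to the finitely many maximal ideals by Chinese remaindering.
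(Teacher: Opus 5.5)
The part of your proposal through $\Gamma_{\mathtt{J}}(M)=0$ and $\mathtt{J}M=M$ follows the route the paper indicates. The paper's proof is that of [C-N1] 4.11 with Lemma 4.5 substituted, and Lemma 4.5 enters exactly through your use of Proposition 4.6(i) to kill $\Gamma_{\mathtt{J}}(M)$. For $\mathtt{J}M=M$ you do not even need [C-N1] 3.8: the composite $M\to\Lambda_{\mathtt{J}}(M)\to M/\mathtt{J}M$ is the canonical surjection, so $H^{\mathtt{J}}_0(M)=0$ suffices. Your ($\Leftarrow$) argument is correct. It can be shortened: $x$ acts bijectively on each $\Tor^R_i(R/\mathtt{J}^t,M)$ while $x^t$ kills it, so every term of the inverse system is already $0$.

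The gap is the last step, which you leave unresolved. The obstacle you describe there is not real, so the proposed detour through higher $H^{\mathtt{J}}_i$ and Proposition 3.5 is unnecessary; the vanishing of all $H_i$ has already been used up in the first two steps.

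\begin{itemize}
\item You need $x$ to avoid $\mathrm{Ass}(M^*)$; the $\mathtt{J}$-torsion of $N^*$ is irrelevant. The sequence $0\to A^*\to M^*\to N^*\to 0$ gives $\mathrm{Ass}(M^*)\subseteq\mathrm{Ass}(A^*)\cup\mathrm{Ass}(N^*)$, and $\mathrm{Ass}(N^*)$ consists of maximal ideals.
\item $\mathtt{J}M=M$ already gives $0:_{M^*}\mathtt{J}=0$, since a character killed by $\mathtt{J}$ vanishes on $\mathtt{J}M=M$. So $\Gamma_{\mathtt{J}}(M^*)=0$, and no maximal ideal is associated to $M^*$.
\item Hence $\mathrm{Ass}(M^*)=\mathrm{Ass}(A^*)$. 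This is a finite set of primes, none containing $\mathtt{J}$, because a prime containing $\mathtt{J}\supseteq\mathfrak{m}_1\cdots\mathfrak{m}_n$ equals some $\mathfrak{m}_i$.
\item The same argument justifies your claim on the other side, which you leave implicit. We have $\mathrm{Ass}(M)\subseteq\mathrm{Ass}(N)\cup\mathrm{Ass}(A)$, and $\mathrm{Ass}(A)$ consists of maximal ideals, which are excluded by $\Gamma_{\mathtt{J}}(M)=0$. So $\mathrm{Ass}(M)=\mathrm{Ass}(N)$.
\item Prime avoidance now gives $x\in\mathtt{J}$ outside $\mathrm{Ass}(N)\cup\mathrm{Ass}(A^*)$. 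Such an $x$ is a non-zero-divisor on both $M$ and $M^*$, so $0:_Mx=0$ and $(M/xM)^*\cong 0:_{M^*}x=0$.
\item Since $M$ is semi-discrete, $xM$ is closed and $M/xM$ is Hausdorff. Proposition 1.8 then forces $xM=M$.
\end{itemize}

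One further detail: if you pass to $\widehat{R}$, the primes to avoid are primes $\mathfrak{P}$ of $\widehat{R}$. Apply prime avoidance in $R$ to their contractions $\mathfrak{P}\cap R$, so that $x$ lies in $\mathtt{J}$ and not merely in $\widehat{\mathtt{J}}$. These contractions do not contain $\mathtt{J}$, since otherwise $\mathfrak{P}\supseteq\mathtt{J}\widehat{R}=\widehat{\mathtt{J}}$, and $\mathfrak{P}$ would be maximal.
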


In [C-N1] 7.5, Cuong and Nam extend well-known results in Grothendieck's local cohomology theory of
finitely generated R-modules for semi-discrete linearly compact modules. We generalized to the semilocal case. In order for a proof analogous to the original case to be possible, it is necessary to use results 1.8, 4.2 (ii), 4.4 and 4.6 (i), all obtained by us and which required changes in their proofs.

\begin{prop}Let $R$ be a semilocal noetherian ring,  $\mathtt{J}$ the Jacobson radical of $R$ and $M$ a non-zero semi-discrete linearly compact $R$-module. Then
	\begin{itemize}
		\item [(i)] $dim_R\,M=$ max$\{i\,|\,H_{\mathtt{J}}^i(M)\neq0\}$ if $dim_R\,M\neq1$;
		\item [(ii)] $dim_R\,(\widehat{M})= $max$\{i\,|\,H_{\mathtt{J}}^i(M)\neq0\}$ if $\widehat{M}\neq0$.
	\end{itemize}
\end{prop}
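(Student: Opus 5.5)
The plan is to dualize Proposition 4.6 with the Macdonald dual. First I reduce to the complete case. By [C-N1] 7.1 (ii), $M$ is naturally an $\widehat{R}$-module. By Proposition 4.2 (ii), $dim_RM=dim_{\widehat{R}}M$, and the independence theorem ([B-S] 4.2.1) gives $H^i_{\mathtt{J}}(M)\cong H^i_{\widehat{\mathtt{J}}}(M)$. So we may assume $R$ is complete with respect to the $\mathtt{J}$-adic topology. Then Remark 4.3 (ii) applies: $M^*=D(M)$ is again semi-discrete linearly compact, and $\omega:M\rightarrow M^{**}$ is a topological isomorphism.

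The key step is Corollary 3.4 (ii) applied to $M^*$, which gives
\[H^i_{\mathtt{J}}(M)\cong H^i_{\mathtt{J}}(M^{**})\cong (H^{\mathtt{J}}_i(M^*))^*.\]
Since the dual functor is an equivalence on $\mathcal{C}$ (a module in $\mathcal{C}$ vanishes iff its dual does), this yields
\[\max\{i\,|\,H^i_{\mathtt{J}}(M)\neq0\}=\max\{i\,|\,H^{\mathtt{J}}_i(M^*)\neq0\}.\]
Here one should check that $H^{\mathtt{J}}_i(M^*)$ lies in $\mathcal{C}$, so that vanishing is detected by the dual. I would get this from the noetherianity results on local homology of modules in $\mathcal{C}$ mentioned in Section 3, or more directly from the fact that $M^*$ vanishes only when $M$ does (Proposition 1.8, enough characters).

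For (i), Proposition 4.4 gives $Ndim\,M^*=dim\,M$. If $dim\,M\neq1$, then $Ndim\,M^*\neq1$, and $M^*\neq0$. Proposition 4.6 (ii) applied to $M^*$ then gives $Ndim\,M^*=\max\{i\,|\,H^{\mathtt{J}}_i(M^*)\neq0\}$, and combining this with the displayed equality finishes (i).

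For (ii), I apply Proposition 4.6 (i) to $M^*$. This needs $\Gamma_{\mathtt{J}}(M^*)\neq0$ and gives
\[\max\{i\,|\,H^{\mathtt{J}}_i(M^*)\neq0\}=Ndim\,\Gamma_{\mathtt{J}}(M^*).\]
The main obstacle is identifying $\Gamma_{\mathtt{J}}(M^*)$ with $(\widehat{M})^*$, where $\widehat{M}=\Lambda_{\mathtt{J}}(M)$. The plan is:
\[\Gamma_{\mathtt{J}}(M^*)=\varinjlim_t(0:_{M^*}\mathtt{J}^t)\cong\varinjlim_t(M/\mathtt{J}^tM)^*\cong(\varprojlim_t M/\mathtt{J}^tM)^*=(\widehat{M})^*.\]
The first isomorphism holds because $(0:_{M^*}\mathtt{J}^t)=(\mathtt{J}^tM)^0\cong(M/\mathtt{J}^tM)^*$ by [Mac] 9.2, using that $\mathtt{J}^tM$ is closed since $M$ is semi-discrete. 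The second is the limit part of Proposition 2.6.

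In particular, $\widehat{M}\neq0$ gives $\Gamma_{\mathtt{J}}(M^*)\neq0$. Next, $\widehat{M}$ is $\mathtt{J}$-separated, linearly compact and semi-discrete, hence noetherian by Proposition 2.2. Therefore $(\widehat{M})^*$ is artinian, and Proposition 4.4 gives $Ndim(\widehat{M})^*=dim\,\widehat{M}$. Chaining these equalities gives (ii). The delicate points, where I would spend the most care, are the continuity and topology bookkeeping in the $\Gamma_{\mathtt{J}}$/completion duality and the semi-discreteness of $\widehat{M}$. For the latter, I would use that $\widehat{M}$ is a quotient of $M$: by [C-N1] 3.8-type arguments, the map $M\rightarrow\widehat{M}$ is surjective for $M$ linearly compact.
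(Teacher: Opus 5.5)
Your proposal is correct and follows essentially the route the paper intends, namely the argument of [C-N1] 7.5 carried over using 1.8, 4.2(ii), 4.4 and 4.6: reduce to complete $R$, detect vanishing through $H^i_{\mathtt{J}}(M)\cong(H^{\mathtt{J}}_i(M^*))^*$ together with enough characters, then use $Ndim\,M^*=dim\,M$ with 4.6(ii) for (i), and $\Gamma_{\mathtt{J}}(M^*)\cong(\widehat{M})^*$ with 4.6(i) and 4.4 for (ii). Your direct computation of that last isomorphism is fine, though it also follows from Corollary 3.4(ii) at $i=0$ applied to $M^*$; note that in the reduction step you should apply Proposition 4.2(ii) to $\widehat{M}$ as well as to $M$.
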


In [C-N1] 7.6 there is examples which show that the conditions $dim\,M\neq1$ and $\widehat{M}\neq0$ in Proposition 4.8 are needful. 

The next four results mentioned are obtained with analogous proofs, of course, replacing the results originally used with our results. It can be concluded that all results from [C-N1] were generalized from local case to semilocal case.

\begin{cor} Let $R$ be a semilocal noetherian ring, $\mathtt{J}$ the Jacobson radical of $R$ and $M$ a non-zero semi-discrete linearly
	compact module. Then $H_{\mathtt{J}}^i(M)=0$ for all $i\geq0$ if and only if there exists an element $x\in\mathtt{J}$ such
	that $xM=M$ and $0:_Mx=0$.
\end{cor}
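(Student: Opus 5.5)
We reduce to the local homology statement, Corollary 4.7, by Macdonald duality, following the pattern of [C-N1] 7.7.

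\emph{Reduction to the complete case.} By [C-N1] 7.1 (ii), Proposition 4.2 and the Independence Theorem ([B-S] 4.2.1) we may assume that $R$ is $\mathtt{J}$-adically complete. The condition ``there is $x\in\mathtt{J}$ with $xM=M$ and $0:_Mx=0$'' transfers between $R$ and $\widehat{R}$. In one direction, $x\in\mathtt{J}$ gives $x\in\widehat{\mathtt{J}}$. In the other, if $\widehat{x}\in\widehat{\mathtt{J}}$ works, we need to replace it by an element of $\mathtt{J}$. I expect this to follow by choosing $x\in\mathtt{J}$ close to $\widehat{x}$, since multiplication by a unit perturbation does not change the two conditions. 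This step needs some care, and I would treat it as in the original local proof.

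\emph{Dualize.} Since $M\in\mathcal{C}$, Remark 4.3 (ii) gives $M^*=D(M)\in\mathcal{C}$ and $M\cong M^{**}$. It also gives $M^*\neq0$, because $(-)^*$ is an equivalence on $\mathcal{C}$. By Corollary 3.4 (ii), applied to $M^*$, we have for all $i$
\[
H^i_{\mathtt{J}}(M)\cong H^i_{\mathtt{J}}(M^{**})\cong (H^{\mathtt{J}}_i(M^*))^*.
\]
The Macdonald dual of a linearly compact module vanishes only if the module is zero, by Proposition 1.8 (enough characters). Hence $H^i_{\mathtt{J}}(M)=0$ for all $i$ if and only if $H_i^{\mathtt{J}}(M^*)=0$ for all $i$. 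Corollary 4.7, applied to the nonzero module $M^*\in\mathcal{C}$, shows this holds if and only if there is $x\in\mathtt{J}$ with $xM^*=M^*$ and $0:_{M^*}x=0$.

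\emph{Transfer the condition back.} Apply the exact functor $(-)^*$ on $\mathcal{C}$ to the sequence
\[
0\to 0:_Mx\to M\xrightarrow{x} M\to M/xM\to 0.
\]
The dual of multiplication by $x$ on $M$ is multiplication by $x$ on $M^*$. This yields $(M/xM)^*\cong 0:_{M^*}x$ and $(0:_Mx)^*\cong M^*/xM^*$. Here $xM$ is closed because $M$ is semi-discrete, so all modules in the sequence lie in $\mathcal{C}$. Since a module in $\mathcal{C}$ is zero exactly when its dual is, we get $xM=M$ if and only if $0:_{M^*}x=0$, and $0:_Mx=0$ if and only if $xM^*=M^*$. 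This completes the equivalence.

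\emph{Main obstacle.} The delicate point is justifying the exactness of $(-)^*$ on these sequences and the identification of the dual map with multiplication by $x$. I would cite [Mac] 9.2 and [C-N1] 6.3 together with Remark 4.3 (ii), using that all submodules involved are closed.
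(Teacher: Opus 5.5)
Your route is the one the paper intends, transferring the proof of [C-N1] 7.7: dualize with Corollary 3.4(ii), apply Corollary 4.7 to $M^*$, and return to $M$ via exactness of $(-)^*$ on $\mathcal{C}$. The dualization steps are fine. The step you call the main obstacle is not actually delicate: on $\mathcal{C}$ we have $M^*=D(M)$ (Remark 4.3(ii)), and $R^*=E(R/\mathtt{J})$ is an injective cogenerator over the semilocal ring $R$. So exactness and faithfulness of the dual on your four-term sequence are immediate.

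\textbf{The gap.} It lies in your reduction step. Because you assume $R$ complete for the whole statement, you end up with $\widehat{x}\in\widehat{\mathtt{J}}$ and must get back into $\mathtt{J}$. The ``unit perturbation'' argument does not do this.
\begin{itemize}
\item An $x\in\mathtt{J}$ with $x-\widehat{x}\in\widehat{\mathtt{J}}^t$ has the form $\widehat{x}+y$; it is not a unit multiple of $\widehat{x}$.
\item In general no unit multiple of $\widehat{x}$ lies in $R$ at all, since $\widehat{x}\widehat{R}\cap R$ can be $0$. For example, take $R=k[X,Y]_{(X,Y)}$ and $\widehat{x}=Y-f(X)$ with $f\in Xk[[X]]$ transcendental over $k(X)$.
\item A density argument could work, but only if you knew that the set of elements of $\widehat{\mathtt{J}}$ acting bijectively on $M$ is open in $\widehat{R}$. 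You have not shown this.
\end{itemize}

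\textbf{The fix.} Do not transfer elements at all; Corollary 4.7 carries no completeness hypothesis.
\begin{itemize}
\item Use completeness only to get the equivalence ``$H^i_{\mathtt{J}}(M)=0$ for all $i$ iff $H_i^{\mathtt{J}}(M^*)=0$ for all $i$'', from Corollary 3.4(ii) computed over $\widehat{R}$.
\item This is legitimate because both sides are unchanged under passage to $\widehat{R}$, by [C-N1] 7.1(ii) and the Independence Theorem.
\item Also, $M^*$ is the same module over $R$ and over $\widehat{R}$: $R^*$ is $\mathtt{J}$-torsion, and continuous maps are $\widehat{R}$-linear, as in the proof of Proposition 1.10.
\item Then apply Corollary 4.7 over $R$ to $M^*\in\mathcal{C}$. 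This produces $x\in\mathtt{J}$ directly, and your dualization argument finishes the proof unchanged.
\end{itemize}
Equivalently, you can close exactly the step you flagged. Corollary 4.7, applied over both $R$ and $\widehat{R}$, shows that a good element exists in $\mathtt{J}$ (resp.\ $\widehat{\mathtt{J}}$) iff all $H_i^{\mathtt{J}}(M)$ (resp.\ $H_i^{\widehat{\mathtt{J}}}(M)$) vanish. These two local homology conditions agree by [C-N1] 7.1(ii).
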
 

Recall that a sequence of elements $x_1,\,\dots,\,x_r$ in $R$ is said to be an $M$-\textit{regular} sequence if $M/ (x_1,\,\dots,\,x_r)M\neq0$ and $M/ (x_1,\,\dots,\,x_r)M\xrightarrow{x_i}M/ (x_1,\,\dots,\,x_r)M$ is injective for
$i=1,\,\dots,\,r$. Denote by $depth_I\,(M)$ the supremum of the lengths of all maximal $M$-regular
sequences in the ideal $I$. Then we have the following result.

\begin{prop} Let $R$ be a semilocal noetherian ring and $M$ a semi-discrete linearly compact $R$-module and $I$ an ideal of $R$ such that $M/IM\neq 0$. Then \[depth_I\,(M)=\,inf\{\,i\,|\,H^{i}_I(M)\neq0\}.\]
\end{prop}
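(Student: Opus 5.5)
We follow the original argument of [C-N1] 7.7, replacing the local tools by the semilocal results of Sections 1--4. The plan is to prove
\[depth_I(M)=\inf\{i\,|\,H^i_I(M)\neq0\}\]
by induction on $\inf\{i\,|\,H^i_I(M)\neq0\}$, as in Grothendieck's classical theorem. The classical proof needs three facts, which we establish in order:
\begin{enumerate}
\item[(a)] maximal $M$-regular sequences in $I$ have finite length;
\item[(b)] if $H^0_I(M)=\Gamma_I(M)=0$, then some $x\in I$ is a non-zerodivisor on $M$;
\item[(c)] the class of semi-discrete linearly compact modules is closed under $M\mapsto M/xM$.
\end{enumerate}
Fact (c) holds because $xM$ is the image of the continuous map $x:M\to M$, hence closed by Remark 1.4 (iii), and the quotient is semi-discrete linearly compact by Remark 1.4 (iv) and Remark 4.3 (i).

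\textbf{Step 1 (structure).} By [Zös] Theorem, take $0\to N\to M\to A\to 0$ with $N$ finitely generated and $A$ artinian. Since $\Gamma_I(A)$ is large and $A$ is $\mathtt J$-torsion, we expect the obstruction to regular elements to come from $\mathrm{Ass}$ of the finitely generated part together with the finitely many primes attached to $A$. For (b), we show that if $\Gamma_I(M)=0$ then $I\not\subseteq\mathfrak p$ for every $\mathfrak p$ in the finite set $\mathrm{Ass}_R M$. Finiteness of $\mathrm{Ass}_R M$ comes from $\mathrm{Ass}M\subseteq \mathrm{Ass}N\cup\mathrm{Ass}A$, and $\mathrm{Ass}A$ consists of finitely many maximal ideals by [Sha] 1.4. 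Prime avoidance then gives $x\in I$ outside every associated prime, so $x$ is a non-zerodivisor on $M$.

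\textbf{Step 2 (induction).} If $\inf=0$, then $\Gamma_I(M)\neq0$, so no element of $I$ is $M$-regular and $depth_I(M)=0$. If $\inf>0$, pick $x$ as in (b). The exact sequence $0\to M\xrightarrow{x}M\to M/xM\to0$ gives the long exact sequence
\[\cdots\to H^{i}_I(M)\xrightarrow{x}H^{i}_I(M)\to H^{i}_I(M/xM)\to H^{i+1}_I(M)\to\cdots.\]
Hence $H^i_I(M/xM)=0$ for $i<\inf-1$. Moreover $H^{\inf-1}_I(M/xM)\neq0$: it surjects onto the kernel of $x$ on $H^{\inf}_I(M)$, and this kernel is nonzero because $H^{\inf}_I(M)$ is $I$-torsion and nonzero. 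Since $M/xM$ satisfies the hypotheses of the statement (using (c), and $(M/xM)/I(M/xM)=M/IM\neq0$), induction gives $depth_I(M/xM)=\inf-1$. The standard argument that every maximal regular sequence in $I$ has the same length as its image after modding out by $x$ then yields $depth_I(M)=\inf$.

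The hypothesis $M/IM\neq0$ and Corollary 4.9 handle the degenerate case where all $H^i_I(M)$ vanish. In that case there is $x$ with $xM=M$, which is excluded once $x\in I$; more carefully, we use Proposition 4.8 to get $H^i_I(M)\neq0$ for some $i$ bounded by $\dim M$.

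\textbf{Main obstacle.} The hardest step is the independence of the length of maximal regular sequences, together with step (b) for non-finitely generated $M$. In the classical argument these use $\Ext^i_R(R/I,M)$ and Nakayama's lemma, and neither is automatic here. Our first attempt is to show $\Ext^i_R(R/I,M)=0$ for $i<r$ and $\neq0$ at $i=r$ for any maximal regular sequence of length $r$. For this we use that $\Ext^i_R(R/I,M)$ is again semi-discrete linearly compact, that $\mathrm{Hom}_R(R/I,M/(x_1,\dots,x_r)M)\neq0$ by maximality and the finite-$\mathrm{Ass}$ argument of Step 1, and we pass through the duality of Proposition 3.3 when needed.
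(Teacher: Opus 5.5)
\textbf{Degenerate case.} Your facts (b) and (c) are proved correctly, and so is the long exact sequence computation in Step 2. The gap is the case where every $H^i_I(M)$ vanishes, together with fact (a), which you never prove. Corollary 4.9 and Proposition 4.8 are statements about $H^i_{\mathtt{J}}(M)$ only. Nothing in your argument passes from the vanishing of all $H^i_I(M)$ to the vanishing of all $H^i_{\mathtt{J}}(M)$, or from the nonvanishing of some $H^i_{\mathtt{J}}(M)$ to that of some $H^i_I(M)$. Even granting its hypothesis, Corollary 4.9 yields some $x\in\mathtt{J}$ with $xM=M$, which contradicts $M/IM\neq0$ only if $x\in I$. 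So you have not shown $s=\inf\{i\,|\,H^i_I(M)\neq0\}<\infty$, and your induction on $s$ does not start.

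The repair needs only noetherianity of $R$. Suppose all $H^i_I(M)=0$. Then (b), (c) and your long exact sequence let you choose $x_1,x_2,\dots\in I$ inductively so that:
$x_{k+1}$ is regular on $M_k=M/(x_1,\dots,x_k)M$; all $H^i_I(M_k)=0$; and $M_k/IM_k=M/IM\neq0$.
Since $x_{k+1}$ is injective on $M_k\neq0$, it cannot lie in $(x_1,\dots,x_k)$. Hence $(x_1)\subsetneq(x_1,x_2)\subsetneq\cdots$ is a strictly ascending chain, which is impossible. The same argument shows that every $M$-regular sequence in $I$ extends to a maximal one, and that is all of (a) you need.

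\textbf{Independence of length.} The step you call the main obstacle is already settled by your own Step 2, so the $\Ext$ detour is unnecessary. As written, you apply the long exact sequence to one chosen $x$, which only controls maximal sequences beginning with $x$. Instead, prove by induction on $s$ that \emph{every} maximal $M$-regular sequence $y_1,\dots,y_r$ in $I$ has length $s$:
for $s=0$, $\Gamma_I(M)\neq0$, so the empty sequence is the only maximal one;
for $s>0$, (b) forces $r\geq1$, your computation (which only uses that $y_1\in I$ is $M$-regular) gives $M/y_1M$ the invariant $s-1$, and $y_2,\dots,y_r$ is maximal on $M/y_1M$.

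\textbf{Comparison with the paper.} With these two repairs your proof is complete and takes a route different from the paper's. The paper gives no separate argument; it defers to [C-N1] with the semilocal substitutions. That argument, like the neighbouring results, goes through completion and Macdonald duality (Proposition 4.2, Remark 4.3(ii), Corollary 3.4) to turn the question into one about $M^*$ and its local homology. Your route uses only Zöschinger's decomposition, finiteness of associated primes, prime avoidance and the ascending chain condition. It therefore avoids the duality machinery entirely and works verbatim whether $R$ is local or semilocal.
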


Following is the artinianness of local cohomology modules.

\begin{prop} Let $R$ be a semilocal noetherian ring, $\mathtt{J}$ the Jacobson radical of $R$ and $M$ a semi-discrete linearly compact $R$-module with \newline $dim_RM=d$. Then the following statements are true.
	\begin{enumerate}
		\item [$(i)$] The local cohomology modules $H_{\mathtt{J}}^{i}(M)$ are artinian $R$-modules for all $i\geq0$.
		\item [$(ii)$] The local cohomology module $H^d_I(M)$ is artinian. 
	\end{enumerate}
\end{prop}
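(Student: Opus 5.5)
Both parts go through Macdonald duality and the Zöschinger decomposition. Everything reduces to the case of a finitely generated module over a complete ring, where the classical artinianness results hold.

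First we reduce to $R$ complete. By [C-N1] 7.1, $M$ carries a natural $\widehat{R}$-module structure. By the Independence Theorem ([B-S] 4.2.1), $H^i_{\mathtt{J}}(M)\cong H^i_{\widehat{\mathtt{J}}}(M)$ and $H^i_I(M)\cong H^i_{I\widehat{R}}(M)$. By Proposition 4.2 (ii), $\dim_RM=\dim_{\widehat{R}}M$. An $R$-module that is $\mathtt{J}$-torsion is artinian over $R$ if and only if it is artinian over $\widehat{R}$. So we may assume $R$ is complete semilocal.

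For $(i)$, write the exact sequence $0\to N\to M\to A\to 0$ from [Zös], with $N$ finitely generated and $A$ artinian. Its long exact sequence gives
$$H^i_{\mathtt{J}}(N)\to H^i_{\mathtt{J}}(M)\to H^i_{\mathtt{J}}(A).$$
Since $A$ is $\mathtt{J}$-torsion, $H^0_{\mathtt{J}}(A)=A$ is artinian and $H^i_{\mathtt{J}}(A)=0$ for $i>0$. The modules $H^i_{\mathtt{J}}(N)$ are artinian for finitely generated $N$ over a semilocal ring: via Mayer--Vietoris, or directly, $H^i_{\mathtt{J}}(N)\cong\bigoplus_j H^i_{\mathfrak{m}_j}(N)$, because the $\mathfrak{m}_j$ are pairwise comaximal and $\Gamma_{\mathtt{J}}=\bigoplus_j\Gamma_{\mathfrak{m}_j}$ on any module. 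Each summand is artinian by [B-S] 7.1.3. Artinian modules are closed under submodules, quotients and extensions, so $H^i_{\mathtt{J}}(M)$ is artinian. As a cross-check, Corollary 3.4 gives $H^i_{\mathtt{J}}(M)\cong(H^{\mathtt{J}}_i(M^*))^*$, and artinianness also follows if local homology of $M^*$ is noetherian; the direct argument avoids needing that.

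For $(ii)$, use the same sequence together with $\dim N\le d$. Since $A$ is $I$-torsion-free in general, we need $H^d_I(A)$. Here $A$ is artinian with $\dim A\le 0$, and by Grothendieck vanishing for arbitrary modules ([B-S] 6.1.2, which holds for any module of support dimension $\le 0$), $H^i_I(A)=0$ for $i>0$. If $d=0$, the statement is that $H^0_I(M)=\Gamma_I(M)$ is artinian. This holds because $\dim M=0$ forces $\operatorname{Supp}N$ to consist of maximal ideals, so $N$ has finite length and $M$ is artinian.

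For $d>0$, the sequence $H^d_I(N)\to H^d_I(M)\to H^d_I(A)=0$ shows that $H^d_I(M)$ is a quotient of $H^d_I(N)$. Here $\dim N\le d$. If $\dim N<d$ then $H^d_I(N)=0$. Otherwise $H^d_I(N)$ is artinian by the top-local-cohomology artinianness theorem for finitely generated modules over a noetherian ring with $\dim N=d$ ([B-S] 7.1.6, stated for local rings). The semilocal version follows from $H^d_I(N)\cong H^d_{I}(N)\otimes\widehat{R}$ and the decomposition $\widehat{R}=\prod_j\widehat{R_{\mathfrak{m}_j}}$: we get $H^d_I(N)\cong\bigoplus_j H^d_{IR_{\mathfrak{m}_j}}(N_{\mathfrak{m}_j})$, each summand artinian.

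The main obstacle is this last reduction of the top local cohomology of $N$ to the local case. It requires checking that $H^d_I(N)$ is already $\mathtt{J}$-torsion, i.e. supported at maximal ideals; this is exactly what the local proof of [B-S] 7.1.6 gives after localizing at each $\mathfrak{m}_j$. Then the decomposition into the $\mathfrak{m}_j$-components is valid by [Sha] 1.4.
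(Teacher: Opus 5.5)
Your proof is correct, but it runs in the opposite direction from the paper's. The paper writes no separate proof of this proposition; it says it follows as in [C-N1] with the semilocal tools. That is the dual argument: pass to $\widehat R$ (via [C-N1] 7.1 and Proposition 4.2), use Corollary 3.4 to write $H^i_{\mathtt J}(M)\cong (H^{\mathtt J}_i(M^*))^*$ and $H^d_I(M)\cong (H^I_d(M^*))^*$, and use Proposition 4.4 to get $\mathrm{Ndim}\,M^*=\dim M=d$. Artinianness is then the Macdonald dual of noetherianness of the (top) local homology modules of the semi-discrete linearly compact module $M^*$. The classical finitely generated statements then come out as Corollary 4.12.

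You go the other way. You first prove the finitely generated semilocal case, which is exactly Corollary 4.12, from the local theorems [B-S] 7.1.3 and 7.1.6 by localizing at the $\mathfrak m_j$. You then pass to $M$ through the Z\"oschinger sequence $0\to N\to M\to A\to 0$, disposing of $A$ by $\mathtt J$-torsionness in (i) and by Grothendieck vanishing with $\dim A\le 0$ in (ii). Your route is more elementary and self-contained. It needs no completeness (your reduction to $\widehat R$ is never actually used), no topology, no noetherianness of local homology modules, and no Macdonald duality, despite your opening sentence. The paper's route keeps everything inside the duality framework, so Corollary 4.12 is a consequence; in yours it is an input.

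Two tidy-ups. First, for $d>0$ the case $\dim N<d$ cannot occur, since $\dim M=\max\{\dim N,\dim A\}$ and $\dim A\le 0$. Second, the last step needs neither $\widehat R$ nor a separate $\mathtt J$-torsion check. Over the semilocal ring $R$, the natural map $X\to\bigoplus_j X_{\mathfrak m_j}$ is injective for every module $X$. Each $H^d_I(N)_{\mathfrak m_j}\cong H^d_{IR_{\mathfrak m_j}}(N_{\mathfrak m_j})$ is either $0$ (when $\dim N_{\mathfrak m_j}<d$) or artinian over $R_{\mathfrak m_j}$ by [B-S] 7.1.6. In the latter case it is $\mathfrak m_j$-torsion, so its $R$-submodules are its $R_{\mathfrak m_j}$-submodules and it is an artinian $R$-module. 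Hence $H^d_I(N)$ is artinian.
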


\begin{cor} Let $R$ be a semilocal noetherian ring, $\mathtt{J}$ the Jacobson radical of $R$ and $M$ a finitely generated $R$-module with $dim_RM=d$. Then the local cohomology modules $H_{\mathtt{J}}^{i}(M)$ and $H_I^d(M)$ are artinian $R$-modules for all $i\geq0$.
\end{cor}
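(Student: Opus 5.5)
The corollary follows by specializing Proposition 4.11 to finitely generated modules. The only real work is to verify that a finitely generated module satisfies the hypotheses there, namely that it is semi-discrete and linearly compact. The catch is that $R$ is not assumed complete, so Remark 1.6 does not apply directly. I therefore plan to pass to the $\mathtt{J}$-adic completion $\widehat{R}$, argue there, and then descend.

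\emph{Reduction to the complete case.} Put $\widehat{M}=\widehat{R}\otimes_R M$. Since $H^i_{\mathtt{J}}(M)$ is $\mathtt{J}$-torsion, it is naturally an $\widehat{R}$-module. By flat base change and the Independence Theorem ([B-S] 4.2.1) we have $H^i_{\mathtt{J}}(M)\otimes_R\widehat{R}\cong H^i_{\widehat{\mathtt{J}}}(\widehat{M})$. For a $\mathtt{J}$-torsion module, tensoring with $\widehat{R}$ changes nothing, so this module is just $H^i_{\mathtt{J}}(M)$. Moreover, for $\mathtt{J}$-torsion modules the lattices of $R$-submodules and $\widehat{R}$-submodules coincide (as noted in Remark 1.5). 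Hence artinianness over $R$ and over $\widehat{R}$ are equivalent, and the same argument handles $H^d_I(M)$ versus $H^d_{I\widehat{R}}(\widehat{M})$. Finally, $\dim_{\widehat{R}}\widehat{M}=\dim_R M=d$ by [G-S] 4.6 and 7.5, as used in Proposition 4.2.

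\emph{The complete case.} Over the complete ring $\widehat{R}$, the finitely generated module $\widehat{M}$ is semi-discrete and linearly compact by Remark 1.6, or equivalently by Proposition 3.1. Proposition 4.11 (i) then shows that $H^i_{\widehat{\mathtt{J}}}(\widehat{M})$ is artinian for all $i$, and Proposition 4.11 (ii) shows that $H^d_{I\widehat{R}}(\widehat{M})$ is artinian. Transporting back along the identifications of the first step finishes the proof.

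The main point needing care is the identification of $H^d_I(M)$ with $H^d_{I\widehat{R}}(\widehat{M})$ when $I\neq\mathtt{J}$, because $H^d_I(M)$ need not be $\mathtt{J}$-torsion. I intend to handle this with faithful flatness of $\widehat{R}$ over the semilocal ring $R$. If $N\otimes_R\widehat{R}$ is artinian over $\widehat{R}$, then $N$ is artinian over $R$: a strictly descending chain in $N$ stays strictly descending after tensoring, by faithful flatness, and this would contradict artinianness of $N\otimes_R\widehat{R}$. Applying this to $N=H^d_I(M)$, where $N\otimes_R\widehat{R}\cong H^d_{I\widehat{R}}(\widehat{M})$ by flat base change, completes the descent. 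The same argument also covers the modules $H^i_{\mathtt{J}}(M)$ without appealing to the torsion identification.
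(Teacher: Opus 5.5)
Your proof is correct and takes essentially the paper's route. The paper gives no separate argument beyond saying the corollary follows from Proposition 4.11 as in [C-N1], and your passage to $\widehat{R}$ (flat base change, $\dim_{\widehat{R}}\widehat{M}=\dim_R M$, Remark 1.6 over $\widehat{R}$, and faithfully flat descent of artinianness) is exactly what makes that specialization legitimate, since by Proposition 3.1 a finitely generated module over a non-complete $R$ need not be linearly compact and Proposition 4.11 cannot be applied to $M$ directly.
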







 Departamento de Ciência e Tecnologia, Universidade Federal Rural do Semi-árido - 59780-000, Caraúbas, RN, Brazil.

\end{document}